\newtheorem{theorem}{Theorem}
\newtheorem{lemma}[theorem]{Lemma}
\newtheorem{remark}[theorem]{Remark}
\newtheorem{corollary}[theorem]{Corollary}
\newtheorem{definition}[theorem]{Definition}
\newtheorem{example}[theorem]{Example}
\newcommand{\N}{\mathbb{N}}
\newcommand{\R}{\mathbb{R}}
\newcommand{\Z}{\mathbb{Z}}
\newcommand{\rev}[1]{{\color{black} #1}}
\DeclareMathOperator*{\diameter}{diam}
\DeclareMathOperator*{\closure}{cl}
\newcommand{\diam}[1]{\diameter\left( #1\right)}
\newcommand{\cl}[1]{\closure\left( #1\right)}
\begin{document}
\title{Bounding the Optimal Number of Policies for Robust $K$-Adaptability}
\author[1]{Jannis Kurtz\footnote{j.kurtz@uva.nl}}
\affil[1]{Amsterdam Business School, University of Amsterdam, 1018 TV Amsterdam, Netherlands }

\date{}
\maketitle

\begin{abstract}
In the realm of robust optimization the $k$-adaptability approach is one promising method to derive approximate solutions for two-stage robust optimization problems. Instead of allowing all possible second-stage decisions, the $k$-adaptability approach aims at calculating a limited set of $k$ such decisions already in the first-stage before the uncertainty is revealed. The parameter $k$ can be adjusted to control the quality of the approximation. However, not much is known on how many solutions $k$ are needed to achieve an optimal solution for the two-stage robust problem. In this work we derive bounds on $k$ which guarantee optimality for general non-linear problems with integer decisions where the uncertainty appears in the objective function or in the constraints. For convex uncertainty sets we show that for objective uncertainty the bound depends linearly on the dimension of the uncertainty, while for constraint uncertainty the dependence can be exponential, still providing the first generic bound for a wide class of problems. \rev{Additionally, we provide approximation guarantees if $k$ is smaller than the derived bounds.} The results give new insights on how many solutions are needed for problems as the decision dependent information discovery problem or the capital budgeting problem with constraint uncertainty. Finally, for finite uncertainty sets we show that calculating the minimal $k$ for which $k$-adaptable and two-stage problems are equivalent is NP-hard and derive a greedy method which approximates this $k$ for the case where no first-stage decisions exist. 
\end{abstract}

\section{Introduction}
Two-stage robust optimization problems appear in a variety of applications where decisions are influenced by uncertain parameters, e.g., the demands of a customer, the travel time, or the population density of a certain district; see \cite{gorissen2015practical,yanikouglu2019survey}. As common in robust optimization, it is assumed that the uncertain parameters lie in an uncertainty set which is pre-constructed by the user. In the two-stage robust setting some of the decisions have to be taken \textit{here-and-now} while some decisions can be taken after the uncertain parameters of the problem are known (\textit{wait-and-see decisions}). The goal is to find a here-and-now decision which optimizes the worst possible objective value over all scenarios in the uncertainty set.

While a large amount of works concentrate on the case where the decision variables are continuous, many real-world applications and combinatorial problem structures require integer decisions; \cite{buchheim2018robust}. Unfortunately, two-stage robust optimization problems with integer wait-and-see decisions are computationally extremely challenging while at the same time the variety of solution methods is still limited. For the case where the uncertain parameters only appear in the objective function promising algorithms based on column-generation or branch \& bound methods were developed (\cite{kammerling2020oracle,arslan2022decomposition,detienne2024adjustable}). At the same time the constraint uncertainty case is still insufficiently investigated. Here, classical column-and-constraint generation (CCG) approaches were adapted for the general mixed-integer case \cite{zhao2012exact} or for interdiction-type problems \cite{lefebvre2023using}. Recently, a neural network supported CCG was developed which can calculate heuristic solutions of high quality much faster than state-of-the-art approaches \cite{dumouchelle2024neurro}. 

One promising method to approximate two-stage robust optimization problems is the $k$-adaptability approach, where, instead of considering all wait-and-see solutions, a limited set of $k$ such solutions is calculated in the first-stage such that the best of it can be chosen after the uncertain parameters are known. This approach was first studied in \cite{bertsimas2010finite} and gained more attention later in \cite{hanasusanto2015k,subramanyam2020k,kurtz2024approximation,romeijnders2021piecewise}. A related special case of the problem, where no first-stage solutions are considered, sometimes called \textit{min-max-min robust optimization}, was studied first in \cite{buchheim2017min} and later in several other works \cite{chassein2019faster,buchheim2018complexity,goerigk2020min,chassein2021complexity,arslan2022min}. 

One important research question is: How many second-stage solutions $k$ do we need such that the $k$-adaptability approach returns an optimal (or approximately optimal) solution of the two-stage robust optimization problem (2RO)? If we know such a number $k$ we can use the $k$-adaptability approach (k-ARO) to solve the two-stage robust problem. Furthermore, it provides insights on the complexity of the uncertainty set in connection with the second-stage problem, since larger values for $k$ indicate a more diverse set of scenarios and required second-stage reactions. \rev{Finally, in some applications as radiotherapy or disaster management a small number of solutions is desired since each solution has to be approved by a doctor or planner; see \cite{qiu2025kadaptabilityapproachprotonradiation,weller2025streamlining}. In this case we want to know if a small number of solutions exists which already provide the best possible objective value or a certain approximation guarantee}. However, insights on the number of wait-and-see solutions needed for \rev{(approximate)} optimality are sparse.

\rev{
In \cite{hanasusanto2015k} the authors show that if the uncertainty only appears in the objective function and if this objective function is linear, at most $k=n+1$ second-stage policies are needed, where $n$ is the minimum of the dimension of the uncertain parameters and the dimension of the second-stage. This coincides with the result observed in \cite{buchheim2017min} for min-max-min robust combinatorial optimization problems, which is a special case of the $k$-adaptability problem. In \cite{kurtz2024approximation} it was shown that under objective uncertainty, if we want to approximate \eqref{eq:2StageRO} by a factor of $1+\alpha(n_y)$, where $n_y$ is the dimension of the second-stage problem, then it is enough to use $k=qn_y$ policies where $q=\frac{M(n_y)}{M(n_y)+\alpha(n_y)}$ and $M(n_y)$ is a value depending on the problem parameters and the dimension $n_y$. In this work we will generalize the latter results to the case of continuous objective functions.

In case the uncertain parameters appear in the constraints, the best known bounds on $k$ are not very promising. In \cite{bertsimas2010finite} the authors argue that under a given continuity assumption and for continuous second-stage decisions the $k$-adaptability approach converges to an optimal solution of \eqref{eq:2StageRO} for $k\to\infty$. Unfortunately, this result is not correct as it was shown later in \cite{kedad2023continuity}. The authors provide counterexamples where the $k$-adaptability approach does not lead to an optimal solution of \eqref{eq:2StageRO} for any $k\in\mathbb N$. However, the authors show that the continuity assumption can be adjusted such that the original convergence result holds. Again, for the case of continuous second-stage decisions, the authors in \cite{el2018piecewise} derive approximation guarantees which \eqref{eq:k-adaptability} provides for \eqref{eq:2StageRO} and show that, if the number of policies $k$ is bounded by a polynomial in the problem parameters, \eqref{eq:k-adaptability} cannot approximate \eqref{eq:2StageRO} better than by a factor of \rev{$m^{1-\varepsilon}$ for any given $\varepsilon >0$, where $m$ is the number of second-stage constraints}.  

In the setting which is studied in this work, namely the set of second-stage solutions is bounded and only contains integer solutions, the number of possible second-stage policies is finite. Hence, the convergence discussion above is not necessary, since trivially if we set $k$ to the number of possible second-stage solutions  the $k$-adaptability problem will return the optimal solution of \eqref{eq:2StageRO}. It is shown in \cite{hanasusanto2015k} that indeed there are problem instances where all second-stage solutions are needed, hence finding a better bound is impossible in the general setting. However, in this work we will derive better bounds on $k$ for certain problem structures.
}

%
% In \cite{hanasusanto2015k,buchheim2017min} it was shown that $k=n+1$ solutions are enough for linear problems with objective uncertainty (where $n$ is the minimum of the \rev{second-stage dimension} and the dimension of the uncertainty). For the constraint uncertainty case the authors in \cite{hanasusanto2015k} present an example where all second-stage solutions are needed to guarantee optimality. To the best of our knowledge there are no better bounds known for the constraint uncertainty case with integer recourse.

\paragraph*{Contributions}
\begin{itemize}
    \item We show that for convex uncertainty sets in the objective uncertainty case the bound on $k$ which is known for the linear case holds even if we consider general non-linear objective functions which are concave in the uncertain parameters. As a consequence we can show for the first time that for robust optimization with decision dependent information discovery at most $k=n_\xi+1$  solutions are needed to guarantee optimality, where $n_\xi$ is the dimension of the uncertainty. Based on the latter bounds on $k$, we derive bounds on the approximation guarantee of the $k$-adaptability approach for arbitrary values of $k$.
    \item We derive bounds on $k$ to guarantee optimality for the constraint uncertainty case. To this end we introduce a new concept called \textit{recourse-stability} which leads to a bound on $k$ which depends on the uncertainty dimension and the number of recourse-stable regions needed to cover the uncertainty set. We show that for certain problem structures the \rev{derived bound on $k$ is significantly smaller than the previously known bounds and provide an example which shows that the bound is tight. Additionally we provide approximation guarantees for smaller values of $k$}.
    \item For finite uncertainty sets, we show that calculating the minimal $k$ for which $k$-adaptable and two-stage problems are equivalent is NP-hard. We derive a greedy method which approximates this value for the case without first-stage decisions.
\end{itemize}

\section{Preliminaries}

\subsection{Notation and Preliminaries}
For any given positive integer $n$ we denote $[n]=\{ 1,2,\ldots ,n\}$, we denote all $n$-dimensional vectors of non-negative real numbers as $\mathbb R_+^n:=\{ x\in\R^n: x\ge 0\}$ and all $n$-dimensional vectors of non-negative integers as $\mathbb Z_+^n:=\{ x\in\Z^n: x\ge 0\}$. The euclidean norm is denoted as $\|\cdot\|$, i.e., $\| x\|=\sqrt{\sum_{i=1}^{n}x_i^2}$ for any $x\in\R^n$. For a given set $S\subseteq \R^n$ we define the diameter of the set as $\diam{\mathcal S}=\max_{x,y\in \mathcal S}\| x-y\|$, the closure of the set as $\cl{\mathcal S}$, where a point $x\in\R^n$ is contained in the closure of $\mathcal S$ if and only if for every radius $\varepsilon>0$ there exists a point $s\in\mathcal S$ with $\|x-s\|<\varepsilon$.

For any $\mathcal X\subseteq \R^n$ we call a function $f:\mathcal X\to \R$ \textit{convex} if and only if \[f(\lambda x + (1-\lambda) y) \le \lambda f(x) + (1-\lambda) f(y)\] holds for all $x,y\in\mathcal X$ and $0\le \lambda\le 1$. A function $f$ is \textit{concave} if $-f$ is convex. The function $f$ is \textit{Lipschitz continuous} with Lipschitz constant $L>0$ if and only if 
\[
|f(x)-f(y)|\le L\| x-y\|
\]
holds for all $x,y\in\mathcal X$.

One preliminary result we will use in Section \ref{sec:objective_uncertainty} and \ref{sec:constraint_uncertainty} was derived in \cite{calafiore2005uncertain}. In this work the authors study convex optimization problems of the form
\begin{align*}
    \mathcal P: \ \min_{x\in\R^n} \ & c^\top x \\
    s.t. \quad & x\in \mathcal X_i \quad i\in [m] 
\end{align*}
where $m\in \Z_+$, $c\in\R^n$ and $\mathcal X_i$ is a closed and convex set for every $i\in [m]$. \rev{Note that the more general set-based definition of a single constraint $x\in \mathcal X_i$ can involve multiple function-based constraints which form the set $\mathcal X_i$.} The authors define the constraint $\mathcal X_k$ to be a \textit{support constraint} if removing it from the problem leads to a strictly better optimal value compared to the original problem $\mathcal P$. They prove the following theorem.
\begin{theorem}[\cite{calafiore2005uncertain}]\label{thm:califiore}
The number of support constraints for Problem $\mathcal P$ is at most $n$.
\end{theorem}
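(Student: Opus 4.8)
The plan is to prove the bound via Helly's theorem on intersections of convex sets in $\R^n$, combined with the monotonicity of the optimal value under adding or removing constraints. Throughout let $J^\ast$ denote the optimal value of $\mathcal P$, and note that, since $\mathcal P$ is assumed to admit an optimal solution, the feasible set $\bigcap_{i\in[m]}\mathcal X_i$ is nonempty. If $m\le n$ there is nothing to prove, since the support constraints form a subset of $[m]$; so assume $m>n$.

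First I would introduce the open halfspace $H=\{x\in\R^n : c^\top x < J^\ast\}$, which is convex, and observe that by the definition of $J^\ast$ no feasible point attains an objective value strictly below $J^\ast$, i.e. $H\cap\bigcap_{i\in[m]}\mathcal X_i=\emptyset$. Applying Helly's theorem to the finite family of $m+1$ convex sets $\{H,\mathcal X_1,\ldots,\mathcal X_m\}$, whose total intersection is empty, yields a subfamily of at most $n+1$ of them whose intersection is already empty. Since the feasible set is nonempty, no subfamily consisting only of the $\mathcal X_i$ can have empty intersection, so $H$ must belong to this subfamily. Hence there is an index set $T\subseteq[m]$ with $|T|\le n$ and $H\cap\bigcap_{i\in T}\mathcal X_i=\emptyset$; equivalently, the relaxed problem that keeps only the constraints in $T$ already has optimal value exactly $J^\ast$ (it is at least $J^\ast$ because $H$ is avoided, and at most $J^\ast$ because dropping constraints cannot increase the minimum).

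It then remains to show that every support constraint lies in $T$, which bounds their number by $|T|\le n$. Let $k$ be a support constraint and suppose for contradiction that $k\notin T$. Then $T\subseteq[m]\setminus\{k\}$, so $\bigcap_{i\ne k}\mathcal X_i\subseteq\bigcap_{i\in T}\mathcal X_i$, and minimizing $c^\top x$ over the smaller set gives a value at least $J^\ast$; combined with the fact that removing constraint $k$ cannot increase the optimal value, the problem without $\mathcal X_k$ has optimal value exactly $J^\ast$, contradicting that $k$ is a support constraint. The main obstacle is precisely this last containment step: the subtlety is that a support constraint need not coincide with a constraint that is ``active'' at the optimizer, so one cannot argue directly from optimality conditions, and the two monotonicity inequalities (removing constraints lowers the value, restricting to $T$ raises it) must be combined carefully to pin the value at $J^\ast$. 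A secondary technical point is justifying that $H$ survives in the Helly subfamily, which is where nonemptiness of the feasible set (equivalently, existence of an optimal solution) is used; the argument extends verbatim to the case where the infimum is not attained by replacing ``minimum'' with ``infimum'' in the definition of $J^\ast$ and $H$.
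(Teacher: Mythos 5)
Your argument is correct, and there is no proof in the paper to compare it against: Theorem~\ref{thm:califiore} is stated as an imported result from \cite{calafiore2005uncertain}, so what you have written is a self-contained derivation rather than a reconstruction of anything in this manuscript. The Helly route works exactly as you describe: the open sublevel halfspace $H=\{x\in\R^n: c^\top x<J^\ast\}$ is disjoint from $\bigcap_{i\in[m]}\mathcal X_i$, Helly's theorem (which for a \emph{finite} family needs no closedness, so mixing the open set $H$ with the closed sets $\mathcal X_i$ is fine) produces an empty subfamily of size at most $n+1$, nonemptiness of the feasible set forces $H$ into that subfamily, and the two monotonicity inequalities pin the value of the $T$-restricted problem at $J^\ast$; the containment $\bigcap_{i\neq k}\mathcal X_i\subseteq\bigcap_{i\in T}\mathcal X_i$ for $k\notin T$ then shows every support constraint lies in $T$. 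This is arguably more transparent than the contradiction argument in the original reference, since it makes the source of the constant $n$ (the Helly number $n+1$, minus the one slot occupied by $H$) explicit. Two points are worth recording. First, your feasibility hypothesis is not cosmetic: if $\bigcap_{i\in[m]}\mathcal X_i=\emptyset$ the bound can fail --- in $\R^1$ with $\mathcal X_1=[0,1]$ and $\mathcal X_2=[2,3]$, removing either constraint improves the value from $+\infty$ to a finite number, giving $n+1=2$ support constraints. This is harmless for the paper, since every invocation of Theorem~\ref{thm:califiore} (in Theorems~\ref{thm:n+1_policies} and~\ref{thm:num_policies_generalcover}) occurs after restricting to feasible instances, but it deserves to be a stated standing assumption. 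Second, the degenerate case $J^\ast=-\infty$ (where $H=\R^n$ fails to be a proper halfspace in your sense, or $H=\emptyset$ if $c=0$) should get a one-line dismissal: an unbounded problem stays unbounded after removing constraints, so it has no support constraints at all; compactness rules this out in every application in the paper anyway.
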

Note that assuming a linear objective function in $\mathcal P$ is without loss of generality since we can always move a convex objective function into the constraints by using the epigraph reformulation.

\subsection{Problem Definitions}\label{sec:problem_definitions}
In this work we consider the general class of (non-linear) two-stage robust optimization problems of the form
\begin{equation}\label{eq:2StageRO}\tag{2RO}
    \inf_{x\in \mathcal X} \sup_{\xi\in \mathcal U} \inf_{y\in \mathcal Y(x)} \ f(x,y,\xi)
\end{equation}
where $\mathcal X\subseteq \mathbb R^{n_x}$ is an arbitrary compact set containing all possible first-stage decisions, $\mathcal Y(x)\subseteq \mathcal Y \subset \mathbb Z^{n_y}$ is the set of feasible second-stage decisions $y$ which can depend on the chosen first-stage decision $x$ and $\mathcal U\subset \mathbb R^{n_\xi}$ is a compact uncertainty set containing all possible scenarios $\xi$. In Sections \ref{sec:objective_uncertainty} and \ref{sec:constraint_uncertainty} we study the case where $\mathcal U$ is convex, while in Section \ref{sec:finite_uncertainty} we study finite uncertainty sets $\mathcal U$. We assume that $\mathcal Y$ is bounded, i.e., it contains a finite number of solutions. Furthermore, $f: \mathcal X\times \mathcal Y \times \mathcal U \to \mathbb R$ is an arbitrary function, if not stated otherwise.

While the uncertainty parameters $\xi$ seem to appear only in the objective function in \eqref{eq:2StageRO}, the problem definition also covers the case of constraint uncertainty due to the generality of the objective function $f$. Indeed, we will consider the case of constraint uncertainty in Section \ref{sec:constraint_uncertainty}, by considering the function 
\[
f(x,y,\xi):=\begin{cases}
    g(x,y,\xi) & \text{ if } A(\xi)x + B(\xi)y \ge h(\xi) \\
    \infty & \text{ otherwise,}
\end{cases}
\]
where $g: \mathcal X\times \mathcal Y \times \mathcal U \to \mathbb R$ is a given continuous objective function, $A(\xi)\in \mathbb R^{m\times n_x}$, $B(\xi)\in \mathbb R^{m\times n_y}$ and $h(\xi)\in \mathbb R^{m}$ are constraint parameters which are given as functions of the uncertain parameters. \rev{The latter function $f$ ensures that any $x\in\mathcal X$ for which a $\xi\in\mathcal U$ exists such that the second-stage constraint system is infeasible has objective value $\infty$ in Problem \eqref{eq:2StageRO}.} 
% We do not assume that relatively complete recourse holds, i.e., there can exist an $x\in \mathcal X$ and $\xi\in \mathcal U$ such that no feasible second-stage solution exists.

The $k$-adaptability approach aims at finding approximate solutions $x\in \mathcal X$ for \eqref{eq:2StageRO}. The idea is, for a fixed parameter $k\in\mathbb N$, to calculate a set of $k$ second-stage policies $y^1,\ldots ,y^k$ already in the first stage, and choose the best of it in the second-stage after the scenario is known. This leads to the problem
\begin{equation}\label{eq:k-adaptability}\tag{k-ARO}
    \text{opt}(k):=\inf_{\substack{x\in \mathcal X \\ y^1, \ldots , y^k\in \mathcal Y(x)}} \sup_{\xi\in \mathcal U} \inf_{i=1,\ldots ,k} \ f(x,y^i,\xi).
\end{equation}
Using this idea we cannot guarantee that the calculated solution $x\in\mathcal X$ is optimal for \eqref{eq:2StageRO}. In fact the quality of the optimal $k$-adaptable solution depends on the parameter $k$. The larger $k$, the better is the approximation for the original two-stage problem $\eqref{eq:2StageRO}$. On the other hand, the larger we choose $k$, the more numerically challenging Problem $\eqref{eq:k-adaptability}$ becomes, since we have to introduce more second-stage decision variables.

%Hence, an interesting research question is: How many second-stage policies $k$ do we need, such that the optimal solution of \eqref{eq:k-adaptability} is also optimal for \eqref{eq:2StageRO}?

\rev{
In the following we provide problem formulations of several optimization problems which are later used in this paper to verify the results.

\paragraph*{Robust Optimization with Decision-Dependent Information Discovery}
This problem was introduced in \cite{vayanos2020robust} and later studied in \cite{paradiso2022exact,omer2023combinatorial}. In the first-stage we can invest into discovering the values of a subset of uncertain parameters where afterwards the worst-case is taken with respect to the information which was discovered. In both of the works \cite{vayanos2020robust,paradiso2022exact} the $k$-adaptability version of the problem is studied which is given as
\[
\min_{\substack{w\in\mathcal W \\ y^1,\ldots ,y^k\in\mathcal Y}} \max_{\bar \xi \in \mathcal U} \min_{i=1,\ldots ,k} \max_{\xi \in \mathcal U(w,\bar \xi)} \xi^\top Cw + \xi^\top P y^i
\]
for matrices $C,P$ of appropriate size, where $\mathcal W\subseteq \{ 0,1\}^{n_w}$, $\mathcal Y\subseteq \{ 0,1\}^{n_y}$, $\mathcal U\subset \mathbb R^{n_\xi}$ is a polyhedral uncertainty set and $U(w,\bar \xi) = \{ \xi\in \mathcal U: w_i\xi_i = w_i\bar \xi_i, i=1,\ldots ,n_\xi\}$.
\paragraph*{Capital Budgeting}
The $k$-adaptable version of the two-stage robust capital budgeting problem (CB) with objective uncertainty was studied in \cite{subramanyam2020k}. The problem is given as
\[
\max_{\substack{x\in \mathcal X \\ y^1, \ldots ,y^k\in\mathcal Y(x,\xi)}} \min_{\xi\in \mathcal U}\max_{i=1,\ldots , k} r(\xi)^\top (x+\kappa y^i)
\]
where $\mathcal X=\{ 0,1\}^n$ and $\mathcal Y(x,\xi)=\{ y\in \{ 0,1\}^n: c(\xi)^\top (x+y)\le B, \ x+y \le e\}$ and $c_i(\xi) = (1+\frac{1}{2}\Phi_i^\top \xi)c_i^0$ are the uncertain costs of project $i$ and $\Phi_i$ is the $i$-th row of a given factor loading matrix $\Phi$. Furthermore, $\mathcal U = [-1,1]^\rho$ is an uncertainty set of all realizations of $\rho$ different risk factors and $e$ is the all-one vector. The risk of project $i$ is given as $r_i(\xi) = (1+\frac{1}{2}\Psi_i^\top \xi)r_i^0$ where $\Psi_i$ is the $i$-th row of a given factor loading matrix $\Psi$. The parameter $\kappa$ models the penalty of postponement. Note that the number of risk factors $\rho$ is usually a small number which is independent of $n$.
\paragraph*{Capacitated Facility Location Problem}
We consider a capacitated facility location problem with uncertain transportation costs and uncertain capacities. We have a set of customers $\mathcal J$ and a set of locations $\mathcal I$ and transportation costs $t_{ij}(\xi)$ between each $i\in\mathcal I$ and $j\in\mathcal J$ which depend on the uncertain scenario $\xi\in \mathcal U$. Furthermore, each location $i\in\mathcal I$ has a capacity $C(\xi) = c^\top \xi$ where all entries of $c$ are integer. Each customer $j\in\mathcal J$ has a known integer demand $d_j>0$. We are allowed to open at most $p$ facilities in the first-stage and for every scenario $\xi$ we afterwards have to assign each customer to an opened facilities such that the sum of assigned demands for each facility does not exceed the capacity. The problem can be formulated as a two-stage robust problem with $\mathcal X=\left\{ x\in \{ 0,1\}^{\mathcal I} : \sum_{i\in\mathcal I} x_i \le p\right\}$ and 
\[
\mathcal Y(x,\xi) = \left\{ y\in \{ 0,1\}^{\mathcal I \times \mathcal J}: \ \sum_{i\in\mathcal I} y_{ij} = 1 \ \forall j\in\mathcal J, \ \sum_{j\in\mathcal J} d_j y_{ij} \le c^\top \xi x_i \ \forall i\in\mathcal I\right\}.
\]
The objective function is $g(y,\xi) = \sum_{i\in\mathcal I}\sum_{j\in\mathcal J} t_{ij}(\xi) y_{ij}$.
}

\section{Objective Uncertainty}\label{sec:objective_uncertainty}
In this section we assume that $\mathcal U$ is a convex uncertainty set. In case of objective uncertainty and a linear objective function the $k$-adaptability problem provides an optimal solution of \eqref{eq:2StageRO} if $k\ge \min\{n_y,n_\xi\}+1$; see \cite{hanasusanto2015k}. The following theorem shows that a similar result holds for arbitrary continuous objective functions which are concave in the uncertain parameters. 

\begin{theorem}\label{thm:n+1_policies}
\rev{Let $\mathcal U$ be convex and} $f:\mathcal X\times \mathcal Y\times \mathcal U\to \mathbb R$ a continuous function such that $f(x, y, \xi)$ is concave in $\xi$ for every $x\in \mathcal X, y\in\mathcal Y$ and let $k\ge n_\xi +1$. Then, a solution $x\in\mathcal X$ is optimal for \eqref{eq:k-adaptability} if and only if it is optimal for \eqref{eq:2StageRO}.
\end{theorem}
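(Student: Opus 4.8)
The plan is to fix the first-stage decision $x\in\mathcal X$ and compare the two inner problems
\[
t^*(x):=\sup_{\xi\in\mathcal U}\inf_{y\in\mathcal Y(x)}f(x,y,\xi), \qquad \inf_{y^1,\dots,y^k\in\mathcal Y(x)}\ \sup_{\xi\in\mathcal U}\ \min_{i=1,\dots,k}f(x,y^i,\xi).
\]
For any choice of policies the inner minimum is taken over a subset of $\mathcal Y(x)$, so $\min_i f(x,y^i,\xi)\ge\inf_{y\in\mathcal Y(x)}f(x,y,\xi)$ pointwise, which immediately gives that the $k$-adaptable inner value is at least $t^*(x)$, and hence $\mathrm{opt}(k)\ge$ the value of \eqref{eq:2StageRO}. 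The whole difficulty is therefore to produce, for each $x$, at most $n_\xi+1$ policies that already realize the value $t^*(x)$.

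To set up Theorem \ref{thm:califiore}, I would reformulate the worst case for a given policy set $S\subseteq\mathcal Y(x)$ via its hypograph: introducing an extra variable $t$,
\[
V(S):=\sup_{\xi\in\mathcal U}\min_{y\in S}f(x,y,\xi)=\sup\left\{ t:\ \xi\in\mathcal U,\ t\le f(x,y,\xi)\ \forall y\in S\right\}.
\]
Minimizing $-t$ over the variables $(\xi,t)\in\mathbb R^{n_\xi+1}$ puts this in the form of Problem $\mathcal P$: the objective is linear, the set $\{\xi\in\mathcal U\}$ is closed and convex by assumption, and each constraint $\{(\xi,t):t\le f(x,y,\xi)\}$ is closed by continuity of $f$ and convex by concavity of $f$ in $\xi$. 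Compactness of $\mathcal U$ and finiteness of $\mathcal Y$ guarantee that the suprema are attained and the value is finite.

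The key step is to choose the policy set correctly. Among all $S\subseteq\mathcal Y(x)$ with $V(S)=t^*(x)$ — the full set $\mathcal Y(x)$ being one of them — I would pick an inclusion-minimal such $S$. By minimality, deleting any $y\in S$ strictly increases $V$, i.e.\ strictly decreases the optimal value $-t^*(x)$ of the minimization $\mathcal P$ restricted to $S$; hence \emph{every} policy constraint of this restricted problem is a support constraint. Theorem \ref{thm:califiore} in dimension $n_\xi+1$ then bounds the total number of support constraints, so $|S|\le n_\xi+1$. Repeating policies to pad $S$ up to exactly $k\ge n_\xi+1$ entries leaves the inner minimum unchanged, so these $k$ policies realize $V=t^*(x)$. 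Combining this with the easy inequality, the optimized $k$-adaptable inner value equals $t^*(x)$ for every $x$, whence $\mathrm{opt}(k)=\inf_{x\in\mathcal X}t^*(x)$ equals the value of \eqref{eq:2StageRO}; since both problems reduce to minimizing the same function $t^*(\cdot)$ over $\mathcal X$, a first-stage $x$ is optimal for one precisely when it is optimal for the other.

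I expect the main obstacle to be exactly the passage from ``few support constraints'' to ``few policies realizing $t^*(x)$''. A naive argument that simply keeps the support constraints of the \emph{full} inner problem can fail under degeneracy — for instance with duplicated or tied policies, where no single policy is individually a support constraint yet several are jointly needed to attain $t^*(x)$ — so one cannot directly conclude that the support constraints preserve the value. The inclusion-minimal selection above is what forces every retained constraint to be genuinely supporting and thereby makes Theorem \ref{thm:califiore} applicable. The remaining points, namely attainment of the suprema and the closed/convex hypotheses of $\mathcal P$, are routine consequences of compactness of $\mathcal U$, finiteness of $\mathcal Y$, continuity of $f$, and concavity of $f$ in $\xi$.
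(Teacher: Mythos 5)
Your proof is correct and follows essentially the same route as the paper's: fix $x$, rewrite the inner $\sup$--$\inf$ as a convex program in $(\xi,t)\in\mathbb R^{n_\xi+1}$ with one closed convex constraint per second-stage policy, and invoke Theorem \ref{thm:califiore} to conclude that at most $n_\xi+1$ policies suffice. Your inclusion-minimal selection of the policy set is a slightly more careful execution of the step the paper phrases as ``remove all constraints except $n_\xi+1$ from the problem without changing the optimal solution'': it handles the degenerate case (e.g.\ tied or duplicated policies) in which a constraint of the full problem need not be individually supporting yet cannot be discarded wholesale, and it is the standard way to make that reduction rigorous.
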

\begin{proof}
First, note that since all sets $\mathcal X,\mathcal U, \mathcal Y$ are compact and $f$ is continuous, all the maxima and minima in the problem definition \eqref{eq:2StageRO} exist and are finite.

Since $\mathcal Y$ is bounded and contains only integer decisions, we know that for $k=|\mathcal Y|$ the problems \eqref{eq:k-adaptability} and \eqref{eq:2StageRO} are equivalent. Fix any  first-stage decision $x\in\mathcal X$. By using an epigraph reformulation we can rewrite the inner max-min problem of \eqref{eq:k-adaptability} with $k=|\mathcal Y(x)|$ as
\begin{align*}
    \max_{z,\xi} & \ z \\
    s.t. \quad & f(x,y,\xi) - z \ge 0 \quad \forall y\in \mathcal Y(x)\\
    & \xi \in \mathcal U \\
    & z \in \mathbb R.
\end{align*}
Since $f$ is concave and continuous in $\xi$ the function $f(x,y,\xi)-z$ is concave and continuous in $(\xi,z)$. Hence, the latter problem is convex, where for every $y\in \mathcal Y(x)$ the feasible set corresponding to the constraint is closed and convex. Additionally, $\xi \in \mathcal U$ and $z \in \mathbb R$ are convex constraints with closed and convex region. From Theorem \ref{thm:califiore} it follows, that the number of support constraints is at most the dimension of the problem, i.e., $n_\xi + 1$. Hence, we can remove all constraints except $n_\xi + 1$ from the problem without changing the optimal solution. We can conclude that at most $n_\xi + 1$ of the second-stage solutions $y\in \mathcal Y(x)$ are needed. This holds for any $x\in\mathcal X$ which proves the result.
\end{proof}
\rev{The latter bound on $k$ does only depends on the dimension of the uncertainty set and not on the dimension of the decision variables $x$ and $y$.} Furthermore, we do not make any assumptions on the function $f$ regarding $x$ and $y$; especially no convexity in $x$ or $y$ is required. 

\rev{
\begin{remark}
If $n_\xi\le n_y$, Theorem \ref{thm:n+1_policies} coincides with the bound derived in \cite{hanasusanto2015k} for the case that $f$ is a linear function. Note that in \cite{hanasusanto2015k} a transformation $Q \xi$ is considered which leads to a bound which involves the rank of matrix $Q$. However, any such linear transformation can be used in Theorem \ref{thm:n+1_policies} as well by using the objective function $\tilde f(x,y,\xi):=f(x,y,Q \xi)$ which remains concave in the uncertain parameters. Additionally, we use the uncertainty set $$\tilde{\mathcal U}=\left\{ \xi\in\mathbb R^{\text{rank}(Q)}: \sum_{i=1}^{\text{rank}(Q)} Q_{j_i} \xi_i \in \mathcal U\right\}$$ where $Q_{j_1},\ldots ,Q_{j_{\text{rank}(Q)}}$ are linearly independent columns of $Q$. Then, $\tilde{\mathcal U}$ is convex which reduces the dimension of the uncertainty parameters to $n_\xi = \text{rank}(Q)$.
\end{remark}
}

\rev{The following example shows, that the bound derived in Theorem \ref{thm:n+1_policies} is tight up to an additive constant of one.
\begin{example}\label{ex:bound_k_tight_objective}
Consider the following $k$-adaptability problem without first-stage variables $x$ and $n_\xi = n_y$,
\[
\min_{y^1, \ldots , y^k\in \mathcal Y} \max_{\xi\in \mathcal U} \min_{i=1,\ldots ,k} \xi^\top y^i ,
\]
where $\mathcal U= \left\{ \xi\in \mathbb R^{n_\xi}: \sum_{i=1}^{n} \xi_i = 1\right\}$ and $\mathcal Y=\left\{ y\in \{ 0,1\}^{n_y}: \sum y_i = 1\right\}$. First, note that for $k=|\mathcal Y | = n_\xi$ we obtain the best possible value which is given as
\[
\text{opt}(n_\xi) = \max_{\xi\in \mathcal U} \min_{y\in \{ e_1,\ldots ,e_n \}} \xi^\top y = \frac{1}{n},
\]
where $e_i$ is the $i$-th unit vector. Assume now that we are allowed to choose only $k=n_\xi -1$ solutions, then we obtain
\[
\text{opt}(n_\xi-1) = \max_{\xi\in \mathcal U} \min_{y\in \{ e_1,\ldots ,e_n \}\setminus \{e_t\}} \xi^\top y = \frac{1}{n-1} > \text{opt}(n_\xi)
\]
for an arbitrary $t\in [n]$. This shows that we need at least $k=n_\xi$ solutions for optimality.
\end{example}
}

The following example shows that we can apply Theorem \ref{thm:n+1_policies} to the robust optimization problem with decision-dependent information discovery (DDID).

\begin{example}
Consider the Robust Optimization Problem with Decision-Dependent Information Discovery (DDID), defined in Section \ref{sec:problem_definitions}. We can rewrite the problem into the form  \eqref{eq:k-adaptability} where
\[
f(x,y,\bar \xi):= \max_{\xi \in \mathcal U(w,\bar \xi)} \xi^\top Cw + \xi^\top P y.
\]
To apply Theorem \ref{thm:n+1_policies} we have to show that $f$ is concave in $\bar \xi$. We can reformulate $f$ as 
\begin{align*}
    \max_\xi \ & \xi^\top Cw + \xi^\top P y \\
    s.t. \quad & w_i\xi = w_i\bar \xi_i \quad i=1,\ldots ,n_\xi \\
    & \xi \in \mathcal U .
\end{align*}
Taking the dual the problem can be transformed into the minimum of linear functions in $\bar \xi$, which is concave and continuous in $\bar \xi$. Hence, from Theorem \ref{thm:n+1_policies} it follows, that at most $k=n_\xi + 1$ second-stage policies are needed to get an optimal solution for DDID.
\end{example}

The next example shows that for the capital budgeting problem the number of policies needed to guarantee optimality can be very small, namely the number of risk factors plus one.
\begin{example}\label{ex:cb_objective}
Consider the capital budgeting problem (CB) defined in Section \ref{sec:problem_definitions} where no uncertainty appears in the constraints, i.e., the costs $c(\xi)$ are certain. Clearly, the objective function $f(x,y,\xi) = r(\xi)^\top (x+\kappa y)$ is linear (and therefore concave and continuous) in $\xi$ and we can apply Theorem \ref{thm:n+1_policies} to show that at most $k=\rho + 1$ second-stage policies are needed, where $\rho$ is usually a small number of risk-factors. \rev{This result aligns with the results in \cite{hanasusanto2015k} for the case $n_\xi\le n_y$. Note that Theorem \ref{thm:n+1_policies} shows that the same bound on $k$ holds if we replace the objective function in CB with an arbitrary continuous function which is concave in $\xi$.}
\end{example}

Next, we derive approximation bounds which the $k$-adaptability problem provides for  \eqref{eq:2StageRO}.

\begin{theorem}\label{thm:approximation_bound}
\rev{Let $\mathcal U$ be convex and} $f:\mathcal X\times \mathcal Y\times \mathcal U\to \mathbb R$ a continuous function such that $f(x, y, \xi)$ is concave in $\xi$ for every $x\in \mathcal X, y\in\mathcal Y$. Furthermore, assume $f$ is Lipschitz continuous in $y$, i.e., there exists a constant $L>0$ such that
\[
|f(x,y,\xi)-f(x,y',\xi)|\le L \|y-y'\| \quad \forall x\in\mathcal X, \xi\in \mathcal U, y,y'\in \mathcal Y.
\]
Then, for any $s,k\in \mathbb N$ with $s\le k$ it holds
\[
\text{opt}(s)-\text{opt}(k) \le L \text{diam}(\mathcal Y) \rev{\ln\left(\frac{k}{s}\right)}.
\]
\end{theorem}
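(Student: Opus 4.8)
The plan is to prove the bound by a telescoping argument that deletes the policies one at a time, thereby reducing the whole statement to a single per-step lemma: \emph{if a fixed first-stage decision $x^*$ together with $j$ policies $y^1,\dots,y^j\in\mathcal Y(x^*)$ attains worst-case value $V_j=\sup_{\xi\in\mathcal U}\min_{i\in[j]}f(x^*,y^i,\xi)$, then one of the $j$ policies can be deleted so that the surviving $j-1$ policies attain a value $V_{j-1}$ with $V_{j-1}-V_j\le L\,\diam{\mathcal Y}/j$.} Granting this, I would start from an optimal $k$-adaptable solution (which exists because $\mathcal X,\mathcal U$ are compact, $\mathcal Y$ is finite and $f$ is continuous, so all suprema and infima are attained), fix its first-stage decision $x^*$, and repeatedly delete policies to pass from $k$ down to $s$. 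Since the resulting $s$ policies still lie in $\mathcal Y(x^*)$, they form a feasible $s$-adaptable solution, whence $\mathrm{opt}(s)$ is bounded by the accumulated value. Summing the per-step costs then gives $\mathrm{opt}(s)-\mathrm{opt}(k)\le L\,\diam{\mathcal Y}\sum_{j=s+1}^{k}\tfrac1j\le L\,\diam{\mathcal Y}\int_s^k\tfrac{dt}{t}=L\,\diam{\mathcal Y}\ln(k/s)$, which is exactly the claim.

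The heart of the proof, and the step I expect to be the main obstacle, is the per-step lemma, where the concavity of $f$ in $\xi$ and the convexity of $\mathcal U$ must be combined. I would argue by contradiction. Writing $g_i(\xi):=f(x^*,y^i,\xi)$ and letting $V^{(r)}:=\sup_{\xi}\min_{i\ne r}g_i(\xi)$ be the value after deleting policy $r$, suppose $V^{(r)}-V_j>L\,\diam{\mathcal Y}/j$ for \emph{every} $r$. For each $r$ pick a maximizer $\xi^{(r)}\in\mathcal U$ of $\min_{i\ne r}g_i$. Two facts then hold at $\xi^{(r)}$: each surviving policy satisfies $g_i(\xi^{(r)})>V_j+L\,\diam{\mathcal Y}/j$ for $i\ne r$ (by the definition of $V^{(r)}$), while the deleted one satisfies $g_r(\xi^{(r)})\le V_j$, since otherwise $\min_i g_i(\xi^{(r)})>V_j$ would contradict the definition of $V_j$. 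Applying the Lipschitz bound $g_i(\xi^{(i)})\ge g_r(\xi^{(i)})-L\|y^i-y^r\|\ge g_r(\xi^{(i)})-L\,\diam{\mathcal Y}$ with any $r\ne i$ (available since $j\ge2$) then also yields the diagonal lower bound $g_i(\xi^{(i)})>V_j-L\,\diam{\mathcal Y}\,(j-1)/j$.

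Finally I would average the scenarios. Set $\bar\xi:=\tfrac1j\sum_r\xi^{(r)}$, which lies in $\mathcal U$ by convexity, and fix $i$. Jensen's inequality (concavity of $g_i$) gives $g_i(\bar\xi)\ge\tfrac1j\sum_r g_i(\xi^{(r)})$. Splitting the sum into the single diagonal term $r=i$ and the $j-1$ off-diagonal terms $r\ne i$ and inserting the bounds above, the two error contributions $-L\,\diam{\mathcal Y}\,(j-1)/j$ and $(j-1)\cdot L\,\diam{\mathcal Y}/j$ cancel exactly, leaving $\tfrac1j\sum_r g_i(\xi^{(r)})>V_j$. Hence $g_i(\bar\xi)>V_j$ for every $i$, so $\min_i g_i(\bar\xi)>V_j$, contradicting $V_j=\sup_\xi\min_i g_i(\xi)\ge\min_i g_i(\bar\xi)$. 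This contradiction establishes the lemma. The only remaining routine checks are the attainment of the suprema and infima by compactness, the bookkeeping that $j$ ranges over $s+1,\dots,k$ (so $j\ge2$ whenever $s\ge1$), and the elementary estimate $\sum_{j=s+1}^{k}\tfrac1j\le\ln(k/s)$; I would treat these briefly.
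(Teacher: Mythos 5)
Your proof is correct, but it follows a genuinely different route from the paper. The paper reformulates the inner $\min_i$ as a minimization over mixing weights $\lambda$ on the simplex, invokes the minimax theorem to pull the $\min_\lambda$ outside the $\max_\xi$, sorts the optimal weights so that $\bar\lambda_i\le 1/i$, and then truncates: it keeps the $s$ policies with the largest weights, dumps the residual weight onto policy $s$, and pays $L\,\diameter(\mathcal Y)\sum_{i=s+1}^{k}\bar\lambda_i$ via the Lipschitz bound. You instead prove a local deletion lemma --- from any $j$ policies one can be removed at cost at most $L\,\diameter(\mathcal Y)/j$ --- and telescope; your lemma is established by contradiction, averaging the $j$ per-deletion worst-case scenarios $\xi^{(r)}$ and using concavity of $f$ in $\xi$ (Jensen) together with convexity of $\mathcal U$ to produce a scenario $\bar\xi$ violating the definition of $V_j$. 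I checked the cancellation in your averaging step ($-L\,\diameter(\mathcal Y)(j-1)/j$ from the diagonal term against $(j-1)\cdot L\,\diameter(\mathcal Y)/j$ from the off-diagonal terms) and it is exact, and the bookkeeping $j\ge 2$ is handled. Both arguments land on the same harmonic sum $\sum_{j=s+1}^{k}1/j$ and the same integral estimate. What each buys: the paper's route is shorter once the minimax swap is granted and identifies the surviving policies globally (the $s$ heaviest ones); yours avoids the minimax theorem entirely, using concavity only through Jensen, and yields a reusable structural fact --- every set of $j$ policies contains one whose removal costs at most $L\,\diameter(\mathcal Y)/j$ --- which suggests a greedy pruning procedure. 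The only point to tidy in a write-up is attainment of the outer infimum defining $\text{opt}(k)$ (the paper glosses over this too); an $\varepsilon$-optimal solution plus $\varepsilon\to 0$ handles it.
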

\begin{proof}
First, we reformulate \eqref{eq:k-adaptability} as
\[
    \min_{\substack{x\in \mathcal X \\ y^1, \ldots , y^k\in \mathcal Y(x)}} \max_{\xi\in \mathcal U} \min_{\substack{\lambda \in \mathbb R_+^k \\ \sum_{i=1}^{k} \lambda_i = 1}} \ \sum_{i=1}^{k} \lambda_if(x,y^i,\xi).
\]
Since $f$ is concave in $\xi$ and $\lambda \ge 0$, also the function $\sum_{i=1}^{k} \lambda_if(x,y^i,\xi)$ is concave in $\xi$. We can apply the classical minimax theorem and swap the inner maximum and minimum operator which leads to the reformulation 
\[
    \min_{\substack{x\in \mathcal X \\ y^1, \ldots , y^k\in \mathcal Y(x) \\ \lambda \in \mathbb R_+^k \\ \sum_{i=1}^{k} \lambda_i = 1}} \max_{\xi\in \mathcal U}\ \sum_{i=1}^{k} \lambda_if(x,y^i,\xi).
\]
Let $(\bar x,\bar y^{1},\ldots ,\bar y^{k}, \bar \lambda)$ be an optimal solution of the latter problem and assume w.l.o.g. that $\bar \lambda_1\ge \ldots \ge \bar \lambda_k$. We define a feasible solution for the $s$-adaptability problem as
\[
x(s) = \bar x, \quad y^1(s)=\bar y^{1}, \ldots , y^s(s)=\bar y^{s},
\]
and
\[
\lambda(s)_1=\bar \lambda_1, \ldots , \lambda(s)_{s-1} = \bar \lambda_{s-1}, \ \lambda(s)_s = \sum_{i=s}^{k}\bar \lambda_i.
\]
Then we have
\begin{align*}
\text{opt}(s)-\text{opt}(k) & \le \max_{\xi\in \mathcal U}\ \sum_{i=1}^{s} \lambda(s)_i f(x(s),y^i(s),\xi) - \max_{\xi\in \mathcal U} \ \sum_{i=1}^{k} \bar \lambda_i f(\bar x,\bar y^{i},\xi).
\end{align*}
Let $\xi^*(s)$ be a scenario which maximizes the first maximum of the latter expression. Then we can further bound
\begin{align*}
    &\max_{\xi\in \mathcal U}\ \sum_{i=1}^{s} \lambda(s)_i f(x(s),y^i(s),\xi) - \max_{\xi\in \mathcal U}\ \sum_{i=1}^{k} \bar \lambda_i f(\bar x,\bar y^{i},\xi) \\
    &\le \sum_{i=1}^{s} \lambda(s)_i f(x(s),y^i(s),\xi^*(s)) - \sum_{i=1}^{k} \bar \lambda_i f(\bar x,\bar y^{i},\xi^*(s)) \\
    & = \sum_{i=s+1}^{k} \bar \lambda_i\left( f(\bar x,\bar y^{s},\xi^*(s)) - f(\bar x,\bar y^{i},\xi^*(s)) \right) \\
    & \le L \sum_{i=s+1}^{k}\bar \lambda_i\| \bar y^{s} - \bar y^{i} \| \\
    & \le L \text{diam}(\mathcal Y) \sum_{i=s+1}^{k}\bar \lambda_i,
\end{align*}
where the first inequality follows since $\xi^*(s)$ is optimal for the first maximum and feasible for the second maximum, the first equality follows from the definition of $x(s)$, $y^i(s)$ and $\lambda(s)$, the second inequality follows from the Lipschitz continuity of $f$, and the last inequality follows from the definition of the diameter. From the sorting $\bar \lambda_1\ge \ldots \ge \bar \lambda_k$ and since $\sum_{i=1}^{k}\bar \lambda_i = 1$ it follows $\bar \lambda_i\le \frac{1}{i}$. Hence we can further bound 
\begin{align*}
    L \text{diam}(\mathcal Y) \sum_{i=s+1}^{k}\bar \lambda_i \le L \text{diam}(\mathcal Y)\sum_{i=s+1}^{k}\frac{1}{i}
    \rev{\le L \text{diam}(\mathcal Y) \int_{s}^k \frac{1}{t}dt = L \text{diam}(\mathcal Y)\ln\left(\frac{k}{s}\right),}
\end{align*}
where the last inequality and equality follow from \rev{classical theory of integrals}. This proves the result.
\end{proof}
The bound in Theorem \ref{thm:approximation_bound} can depend on the dimension $n_y$, since $\text{diam}(\mathcal Y)$ can depend on $n_y$. However, it goes to zero if $s\to k$. A similar bound was derived in \cite{kurtz2024approximation} for the linear case and it was shown that the bound \rev{can be used to derive bounds on the number of policies $k$ needed to achieve certain approximation guarantees}. Similarly, for the non-linear case studied in this work, we can apply Theorem \ref{thm:approximation_bound} with $k=n_\xi+1$ \rev{to bound the difference between $\text{opt}(k)$ and the optimal value of \eqref{eq:2StageRO} for any integer $1\le k\le n_\xi+1$.} Furthermore, we can use Theorem \ref{thm:approximation_bound} to provide bounds on the number of policies $k$ which lead to a certain additive approximation guarantee $\alpha$; see Figure \ref{fig:approximation_guarantee}.

\begin{figure}
    \centering
    \includegraphics[width=0.4\linewidth]{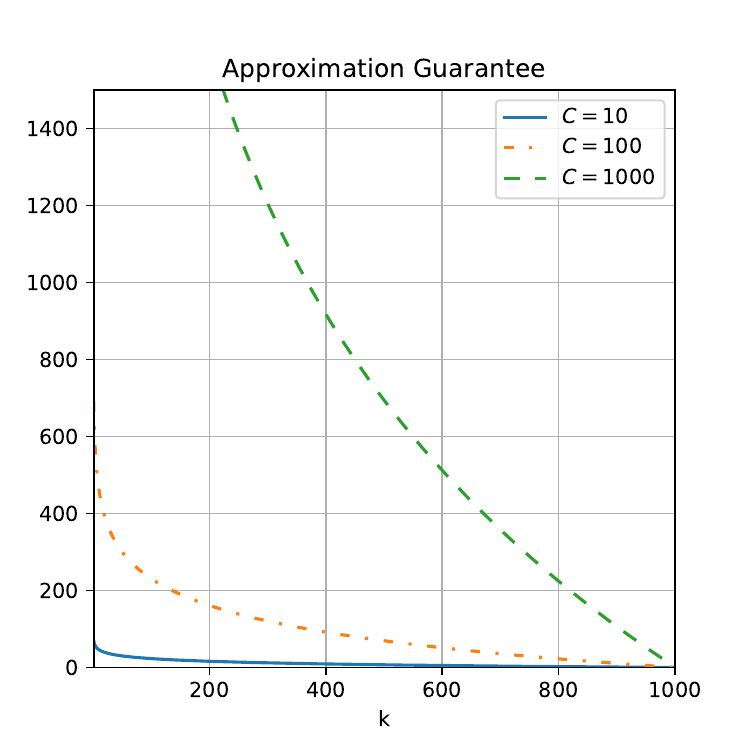}
    \caption{Plot of the additive approximation guarantee \rev{$\alpha=C\ln\left( \frac{n_\xi +1}{k}\right)$} which \eqref{eq:k-adaptability} provides for \eqref{eq:2StageRO} depending on $k$ for different constants $C$ and $n_\xi=1000$.}
    \label{fig:approximation_guarantee}
\end{figure}

\rev{\begin{corollary}
Under the assumptions of Theorem \ref{thm:approximation_bound} assume that $k\ge n_\xi+1-\frac{(n_\xi+1)(e^{\frac{\alpha}{L\text{diam}(\mathcal Y)}}-1)}{ e^{\frac{\alpha}{L\text{diam}(\mathcal Y)}}}$ and $\alpha\ge 0$. Then it holds
    \[
    \text{opt}(k) \le \text{opt}(\text{2RO}) + \alpha ,
    \]
    where $\text{opt}(\text{2RO})$ is the optimal value of \eqref{eq:2StageRO}.
\end{corollary}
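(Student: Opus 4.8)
The plan is to combine the two previous results of this section. Theorem \ref{thm:n+1_policies} tells us that $\text{opt}(n_\xi+1)$ already coincides with the optimal value of \eqref{eq:2StageRO}, so $n_\xi+1$ plays the role of the ``full'' number of policies, while Theorem \ref{thm:approximation_bound} quantifies how much we lose by using fewer. Concretely, I would invoke Theorem \ref{thm:approximation_bound} with the two indices chosen as $s=k$ and the larger index equal to $n_\xi+1$ (for the moment assuming $k\le n_\xi+1$, which is the interesting regime). This yields
\[
\text{opt}(k)-\text{opt}(n_\xi+1)\le L\,\text{diam}(\mathcal Y)\,\ln\!\left(\frac{n_\xi+1}{k}\right),
\]
and substituting $\text{opt}(n_\xi+1)=\text{opt}(\text{2RO})$ from Theorem \ref{thm:n+1_policies} turns this into
\[
\text{opt}(k)\le \text{opt}(\text{2RO})+L\,\text{diam}(\mathcal Y)\,\ln\!\left(\frac{n_\xi+1}{k}\right).
\]

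The remaining work is purely algebraic: I would force the logarithmic term to be at most $\alpha$. Writing $\beta:=e^{\alpha/(L\,\text{diam}(\mathcal Y))}$, the inequality $L\,\text{diam}(\mathcal Y)\ln\big((n_\xi+1)/k\big)\le\alpha$ rearranges to $k\ge (n_\xi+1)/\beta$. The key observation is that the threshold stated in the corollary simplifies to exactly this quantity, since
\[
(n_\xi+1)-\frac{(n_\xi+1)(\beta-1)}{\beta}=(n_\xi+1)\left(1-\frac{\beta-1}{\beta}\right)=\frac{n_\xi+1}{\beta}.
\]
Hence the hypothesis $k\ge (n_\xi+1)-(n_\xi+1)(\beta-1)/\beta$ is equivalent to $\ln\big((n_\xi+1)/k\big)\le \alpha/(L\,\text{diam}(\mathcal Y))$, which drives the logarithmic error term in the displayed bound down to at most $\alpha$ and gives the claim.

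Finally, I would dispose of the boundary case $k>n_\xi+1$ separately, which is where a little care is needed rather than any real difficulty. For such $k$ one cannot apply Theorem \ref{thm:approximation_bound} with $s=k$ and upper index $n_\xi+1$ (the hypothesis $s\le k$ fails), but it follows from monotonicity of $\text{opt}(\cdot)$ in the number of policies — obtained simply by duplicating a policy, so that $\text{opt}(k+1)\le\text{opt}(k)$ — together with Theorem \ref{thm:n+1_policies} that $\text{opt}(k)\le \text{opt}(n_\xi+1)=\text{opt}(\text{2RO})\le \text{opt}(\text{2RO})+\alpha$, using $\alpha\ge 0$. I expect the only genuine subtlety to be bookkeeping: noting that $\beta\ge 1$ because $\alpha\ge 0$, so that the threshold $(n_\xi+1)/\beta$ never exceeds $n_\xi+1$ and the two regimes of the case split are consistent. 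No step requires more than the two cited theorems and elementary manipulation of the exponential.
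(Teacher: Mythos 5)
Your proof is correct and follows essentially the same route as the paper: combine Theorem \ref{thm:n+1_policies} (so that $\text{opt}(\text{2RO})=\text{opt}(n_\xi+1)$) with Theorem \ref{thm:approximation_bound} applied between $k$ and $n_\xi+1$, then observe that the stated threshold simplifies to $(n_\xi+1)e^{-\alpha/(L\operatorname{diam}(\mathcal Y))}$. The only cosmetic difference is that the paper routes the bound through $s=n_\xi+1-l$ with $l$ a floor of the fractional term, whereas you apply the theorem directly with $s=k$ (and handle $k>n_\xi+1$ by monotonicity), which is if anything slightly cleaner.
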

\begin{proof}
From Theorem \ref{thm:n+1_policies} we know that $\text{opt}(\text{2RO})=\text{opt}(n_\xi+1)$. Define $l:=\lfloor\frac{(n_\xi+1)(e^{\frac{\alpha}{L\text{diam}(\mathcal Y)}}-1)}{ e^{\frac{\alpha}{L\text{diam}(\mathcal Y)}}}\rfloor$. Then, it follows from Theorem \ref{thm:approximation_bound}
\begin{align*}
    \text{opt}(k) - \text{opt}(\text{2RO}) & \le \text{opt}(n_\xi+1-l) - \text{opt}(n_\xi+1) \\
    &\le L \text{diam}(\mathcal Y) \ln\left( \frac{n_\xi + 1}{n_\xi + 1 - l}\right) \\ 
    & \le L \text{diam}(\mathcal Y) \ln\left( \frac{n_\xi + 1}{\frac{(n_\xi+1)(e^{\frac{\alpha}{L\text{diam}(\mathcal Y)}}) - (n_\xi+1)(e^{\frac{\alpha}{L\text{diam}(\mathcal Y)}}-1)}{e^{\frac{\alpha}{L\text{diam}(\mathcal Y)}}}}\right) \\
    & = L \text{diam}(\mathcal Y) \ln\left( e^{\frac{\alpha}{L\text{diam}(\mathcal Y)}}\right) = \alpha
\end{align*}
where the first inequality follows from $k\ge n_\xi + 1 - l$ and $\text{opt}(\text{2RO})=\text{opt}(n_\xi+1)$, the second inequality follows from Theorem \ref{thm:approximation_bound}, the third inequality follows from the definition of $l$, and the last two equalities follow from simplifying the terms.
\end{proof}}

\section{Constraint Uncertainty} \label{sec:constraint_uncertainty}
In this section we study the connection between Problems $\eqref{eq:2StageRO}$ and \eqref{eq:k-adaptability} when the uncertainty appears in the constraints and if $\mathcal U$ is convex. More precisely, we consider functions
\[
f(x,y,\xi):=\begin{cases}
    g(x,y,\xi) & \text{ if } A(\xi)x + B(\xi)y \ge h(\xi) \\
    \infty & \text{ otherwise,}
\end{cases}
\]
where $g: \mathcal X\times \mathcal Y \times \mathcal U \to \mathbb R$ is a given continuous objective function which is concave in $\xi$ and $A(\xi)\in \mathbb R^{m\times n_x}$, $B(\xi)\in \mathbb R^{m\times n_y}$ and $h(\xi)\in \mathbb R^{m}$ are the constraint parameters which are given as affine functions of the uncertain parameters. The two-stage robust problem is then given as
\begin{equation}\label{eq:2StageRO_constr}\tag{2RO-C}
    \inf_{x\in \mathcal X} \sup_{\xi\in \mathcal U} \inf_{y\in \mathcal Y(x)} \ f(x,y,\xi)
\end{equation}
and the $k$-adaptability problem is given as
\begin{equation}\label{eq:k-adaptability_constr}\tag{k-ARO-C}
    \inf_{\substack{x\in \mathcal X \\ y^1, \ldots , y^k\in \mathcal Y(x)}} \sup_{\xi\in \mathcal U} \inf_{i=1,\ldots ,k} \ f(x,y^i,\xi).
\end{equation}
Note that in contrast to the objective uncertainty case we have to use the infimum and supremum operators since for discontinuous functions $f$ we cannot guarantee that the maximum or minimum is always attained; see \cite{hanasusanto2015k} for an example. Since $\mathcal Y$ is finite at least the inner infimum could be replaced by the minimum operator, but for comprehensibility we will use the infimum operator instead.

Note that \rev{in \eqref{eq:2StageRO_constr}} for any $x\in\mathcal X$ for which a $\xi\in\mathcal U$ exists such that there exists no $y\in\mathcal Y(x)$ which is feasible for $A(\xi)x + B(\xi)y \ge h(\xi)$, the objective value is $\infty$. We call such a solution \textit{infeasible}. \rev{Similarly, for \eqref{eq:k-adaptability_constr} we call a solution $x\in\mathcal X$ infeasible if no set of $k$ second-stage solutions $y^1,\ldots ,y^k\in \mathcal Y(x)$ exists such that for every $\xi\in\mathcal U$ at least one $y^i$ is feasible for $A(\xi)x + B(\xi)y \ge h(\xi)$.} Furthermore, we assume that \eqref{eq:k-adaptability_constr} with $k=1$ always has at least one feasible solution $x$, i.e., the problems \eqref{eq:2StageRO_constr} and \eqref{eq:k-adaptability_constr} are feasible for any $k$. Since $g$ is continuous and all sets $\mathcal X, \mathcal U, \mathcal Y$ are compact, it follows that the optimal value of all the latter problems is finite.  

Unfortunately, the bounds on $k$ derived in the previous section are not valid in the constraint uncertainty case. In \cite{hanasusanto2015k} the authors provide an example where in \eqref{eq:k-adaptability_constr} all $k=|\mathcal Y|$ second-stage policies are needed to obtain an optimal solution to \eqref{eq:2StageRO_constr}. Hence, there is no hope to obtain a better bound in the general setting. However, we will derive better bounds in this section for certain problem structures.

The main idea for the results is presented in the following. Consider any fixed first-stage solution $x\in\mathcal X$ which is feasible. Following the reformulation of the proof of Theorem \ref{thm:n+1_policies} we can reformulate the inner sup-inf problem of \eqref{eq:k-adaptability_constr} for $k=|\mathcal Y(x)|$ as
\begin{equation}\label{eq:reformulation_constraint_uncertainty}
\begin{aligned}
    \sup_{z,\xi} & \ z \\
    s.t. \quad & f(x,y,\xi) - z \ge 0 \quad \forall y\in \mathcal Y(x)\\
    & \xi \in \mathcal U, \ z \in \mathbb R.
\end{aligned}
\end{equation}
Unfortunately, we cannot apply the same argumentation as in the proof of Theorem \ref{thm:n+1_policies} since now the function $f$ is not concave in $\xi$. In fact, the latter problem is a problem with up to $|\mathcal Y(x)|$ non-convex constraints and we cannot use Theorem \ref{thm:califiore} to bound the number of support constraints. However, assume we know a convex region $\mathcal D \subset \mathcal U$ for which the following holds: for every $y\in \mathcal Y(x)$, the solution $y$ is feasible for the constraint system $B(\xi)y \ge h(\xi)-A(\xi)x$ either for all $\xi\in\mathcal D$ or for no $\xi\in\mathcal D$. We call such a region \textit{recourse-stable} and we denote by $\mathcal Y_{\mathcal D}(x)$ the set of second-stage solutions in $\mathcal Y(x)$ which are feasible for all $\xi\in \mathcal D$. Note that $\mathcal D$ can be an open set. \rev{More formally, we provide the following definitions. 
\begin{definition}[Recourse Stability]
For a given $x\in\mathcal X$ a convex set $\mathcal D\subset \mathcal U$ is called recourse-stable, if
\[
\left\{ y\in \mathcal Y(x): \exists \xi\in \mathcal D \ s.t. \ A(\xi) x + B(\xi) y \ge h(\xi)\right\} = \left\{ y\in \mathcal Y(x): \forall \xi\in \mathcal D, A(\xi) x + B(\xi) y \ge h(\xi)\right\}.
\]
\end{definition}
\begin{definition}
The set of feasible second-stage solutions for a recourse-stable region $\mathcal D$ is defined as
\[
\mathcal Y_{\mathcal D}(x):=\left\{ y\in \mathcal Y(x): \forall \xi\in \mathcal D, A(\xi) x + B(\xi) y \ge h(\xi)\right\} 
\]
\end{definition}
}

If we consider Problem \eqref{eq:reformulation_constraint_uncertainty} only on a convex recourse-stable region $\mathcal D$ (instead of $\mathcal U$) then we can remove all constraints for which the corresponding second-stage solution $y$ is infeasible on $\mathcal D$ since the left-hand-side constraint value is infinity. For all others, we can replace the function $f$ by the function $g$, leading to
\begin{equation}\label{eq:reformulation_constraint_uncertainty2}
\begin{aligned}
    \sup_{z,\xi} & \ z \\
    s.t. \quad & g(x,y,\xi) - z \ge 0 \quad \forall y\in \mathcal Y_{\mathcal D}(x) \\
    & \xi \in \text{cl}\left(\mathcal D\right), \ z \in \mathbb R, 
\end{aligned}
\end{equation}
where we additionally replaced $\mathcal D$ by its closure. This can be done since $h,A,B$ are affine functions in $\xi$ and hence the set of $\xi$ which fulfill the constraints $B(\xi)y \ge h(\xi)-A(\xi)x$ for a given $y\in \mathcal Y_{\mathcal D}(x)$ is closed and contains the set $\mathcal D$. It follows that all solutions in $\mathcal Y_{\mathcal D}(x)$ are also feasible for all $\xi\in \text{cl}(\mathcal D)$ and using function $g$ instead of $f$ is valid.

Since $g$ is concave in $\xi$, Problem \eqref{eq:reformulation_constraint_uncertainty2} is a convex problem and since $g$ is continuous in $\xi$ every constraint describes a closed convex set and the supremum can be replaced by the maximum.  We can apply Theorem \ref{thm:califiore} to show that at most $n_\xi+1$ support constraints are necessary, i.e., we can remove all but $n_\xi+1$ of the second-stage policies without changing the optimal solution. Assume now we have $R$ convex recourse-stable regions $\mathcal D_1,\ldots ,\mathcal D_R\subseteq \mathcal U$ such that $\cl{\mathcal D_1}\cup\ldots\cup\cl{\mathcal D_R} = \mathcal U$. We can now apply the latter idea to every of the recourse-stable regions, which indicates that we need at most $R(n_\xi + 1)$ second-stage policies in total. Note that the recourse-stability of a region depends on the solution $x\in \mathcal X$. However, if such a cover of at most $R$ convex recourse-stable regions exists for every $x\in \mathcal X$ the previous derivation motivates the following Theorem.

% , then Problem \eqref{eq:k-adaptability} is equivalent to
% \begin{equation}\label{eq:reformulation_constraint_uncertainty2}
% \begin{aligned}
%     \max_{z,\xi^1,\ldots ,\xi^R} & \ z \\
%     s.t. \quad & z-z^i\ge 0 \quad i=1,\ldots ,R \\
%     & g(x,y,\xi^i) - z^i \ge 0 \quad \forall y\in \mathcal Y_{\mathcal D_i}(x), \ i=1,\ldots ,R\\
%     & \xi^i \in \text{cl}(\mathcal D_i) \quad i=1,\ldots ,R \\
%     & z\in\mathbb R, \ z^i \in \mathbb R\quad i=1,\ldots ,R.
% \end{aligned}
% \end{equation}
% The latter problem is a convex problem in dimension $R(n_\xi + 1)+1$ and we can apply Theorem 2 in \cite{calafiore2005uncertain} again to conclude that we can remove all but $R(n_\xi + 1)+$ of the constraints without changing the optimal solution and optimal value. 
\begin{theorem}\label{thm:num_policies_generalcover}
\rev{Let $\mathcal U$ be convex and} $g:\mathcal X\times \mathcal Y\times \mathcal U\to \mathbb R$ a continuous function such that $g(x, y, \xi)$ is concave in $\xi$ for every $x\in \mathcal X, y\in\mathcal Y$. Furthermore, assume that for every $x\in\mathcal X$ there exist $R$ convex recourse-stable regions $\mathcal D_1,\ldots ,\mathcal D_R\subseteq \mathcal U$ such that $\cl{\mathcal D_1}\cup\ldots\cup\cl{\mathcal D_R} = \mathcal U$. Then, if \[k\ge \min\left\{ R(n_\xi +1), |\mathcal Y|\right\},\] a solution $x\in\mathcal X$ is optimal for \eqref{eq:k-adaptability_constr} if and only if it is optimal for \eqref{eq:2StageRO_constr}.
\end{theorem}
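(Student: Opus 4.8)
The plan is to reduce the statement to a purely per-first-stage claim: for every fixed feasible $x\in\mathcal X$, the inner worst-case value $V(x):=\sup_{\xi\in\mathcal U}\inf_{y\in\mathcal Y(x)}f(x,y,\xi)$ attained when all $|\mathcal Y(x)|$ policies are available can already be attained using a subset of at most $R(n_\xi+1)$ of them. Granting this, the argument closes quickly. Any choice of $k$ policies $y^1,\dots,y^k\in\mathcal Y(x)$ satisfies $\sup_\xi\inf_i f(x,y^i,\xi)\ge V(x)$, since restricting the inner infimum to a subset of $\mathcal Y(x)$ can only increase it; thus $V(x)$ is a lower bound for the inner problem of \eqref{eq:k-adaptability_constr}, and whenever $k\ge R(n_\xi+1)$ this bound is met by taking the selected subset and padding it to exactly $k$ policies by repetition. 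Hence for every $x$ the inner problem of \eqref{eq:k-adaptability_constr} equals $V(x)$, the objective of \eqref{eq:2StageRO_constr} at $x$, so the two problems share the same value and the same set of optimal $x$. The case $k\ge|\mathcal Y|$ is the trivial equivalence already noted, which accounts for the $\min\{\cdot,|\mathcal Y|\}$, while infeasible $x$ carry value $\infty$ in both problems and can be disregarded.

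To produce the subset I would fix a feasible $x$ and take the cover $\cl{\mathcal D_1}\cup\dots\cup\cl{\mathcal D_R}=\mathcal U$ by convex recourse-stable regions. On each $\cl{\mathcal D_r}$ I pass from the full reformulation \eqref{eq:reformulation_constraint_uncertainty} to the restricted convex program \eqref{eq:reformulation_constraint_uncertainty2}: by recourse stability every $y\notin\mathcal Y_{\mathcal D_r}(x)$ is infeasible throughout $\mathcal D_r$ and contributes the constant $\infty$, while every $y\in\mathcal Y_{\mathcal D_r}(x)$ stays feasible on the whole closure $\cl{\mathcal D_r}$ (the feasibility set of an affine system is closed and contains $\mathcal D_r$), where $f=g$. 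Since $g$ is continuous and concave in $\xi$, program \eqref{eq:reformulation_constraint_uncertainty2} is convex in $(\xi,z)$ with closed convex constraint regions, so Theorem~\ref{thm:califiore} applies exactly as in the proof of Theorem~\ref{thm:n+1_policies}: at most $n_\xi+1$ of the constraints are support constraints, and dropping the rest leaves the optimal value $W_r:=\sup_{\xi\in\cl{\mathcal D_r}}\inf_{y\in\mathcal Y_{\mathcal D_r}(x)}g(x,y,\xi)$ unchanged. This yields a set $S_r\subseteq\mathcal Y_{\mathcal D_r}(x)$ with $|S_r|\le n_\xi+1$ and $\sup_{\xi\in\cl{\mathcal D_r}}\inf_{y\in S_r}g(x,y,\xi)=W_r$. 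I then set $S:=\bigcup_{r=1}^R S_r$, so that $|S|\le R(n_\xi+1)$.

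It remains to verify $\sup_{\xi\in\mathcal U}\inf_{y\in S}f(x,y,\xi)=V(x)$. The inequality $\ge$ is the subset monotonicity above. For $\le$, since the closures cover $\mathcal U$ the left side equals $\max_r\sup_{\xi\in\cl{\mathcal D_r}}\inf_{y\in S}f$; using $S_r\subseteq S$ together with the fact that the policies in $S_r$ are feasible on $\cl{\mathcal D_r}$ (so $f=g$ there), each term is bounded by $\sup_{\xi\in\cl{\mathcal D_r}}\inf_{y\in S_r}g=W_r$. The final and most delicate link is $W_r\le V(x)$, which is exactly where the boundary must be controlled: on $\cl{\mathcal D_r}\setminus\mathcal D_r$ additional policies may become feasible, so a priori the restricted value $W_r$ could exceed what the full policy set allows. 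I would resolve this by continuity: the map $\xi\mapsto\min_{y\in\mathcal Y_{\mathcal D_r}(x)}g(x,y,\xi)$ is a finite minimum of continuous functions, hence continuous, so its supremum over $\cl{\mathcal D_r}$ equals its supremum over the dense subset $\mathcal D_r$; and for $\xi\in\mathcal D_r$ recourse stability forces $\min_{y\in\mathcal Y_{\mathcal D_r}(x)}g(x,y,\xi)=\inf_{y\in\mathcal Y(x)}f(x,y,\xi)\le V(x)$. Combining gives $W_r\le V(x)$, hence $\max_r W_r\le V(x)$, which closes the $\le$ direction and the whole argument.

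I expect the continuity/boundary step to be the real obstacle. The recourse-stability decomposition is only guaranteed on the (possibly open) regions $\mathcal D_r$, whereas Theorem~\ref{thm:califiore} must be invoked on their closures to have closed convex constraint sets and an attained supremum; the two are reconciled only because the inner minimum over the fixed finite set $\mathcal Y_{\mathcal D_r}(x)$ is continuous, so no spurious value is created on the boundary. The remaining pieces — subset monotonicity, the finite-union splitting of the supremum, and the reduction from the global min-max-min problem to the per-$x$ claim — are routine.
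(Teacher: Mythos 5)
Your proposal is correct and follows essentially the same route as the paper's proof: decompose $\mathcal U$ into the closures of the recourse-stable regions, apply Theorem~\ref{thm:califiore} to the convex program \eqref{eq:reformulation_constraint_uncertainty2} on each closure to extract at most $n_\xi+1$ policies per region, take the union, and control the boundary $\cl{\mathcal D_r}\setminus\mathcal D_r$ where extra policies may become feasible. The only (immaterial) difference is that you handle that boundary step via density of $\mathcal D_r$ in $\cl{\mathcal D_r}$ and continuity of the finite minimum $\xi\mapsto\min_{y\in\mathcal Y_{\mathcal D_r}(x)}g(x,y,\xi)$, whereas the paper writes out the equivalent argument with an explicit convergent sequence $\bar\xi_t\to\bar\xi$.
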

\begin{proof}
Consider any fixed $x\in\mathcal X$ and $R$ convex recourse-stable regions $\mathcal D_1,\ldots ,\mathcal D_R\subseteq \mathcal U$ such that $\cl{\mathcal D_1}\cup\ldots\cup \cl{\mathcal D_R} = \mathcal U$. Since $g$ is concave in $\xi$, for every $i\in [R]$ Problem \eqref{eq:reformulation_constraint_uncertainty2} with $\mathcal D=\mathcal D_i$ is convex and since $g$ is continuous in $\xi$ every constraint corresponds to a convex closed set. Hence, we can apply Theorem \ref{thm:califiore} which shows that we can remove all constraints except $n_\xi + 1$ support constraints without changing the optimal value of the problem. For every $i\in [R]$ let $y^{i1},\dots ,y^{i(n_\xi+1)}\in \mathcal Y_{\mathcal D_i}(x)$ be the solutions related to the support constraints \rev{(assume we use duplications of the solutions in case $\mathcal Y_{\mathcal D_i}(x)$ contains less than $n_\xi+1$ solutions)}. We define now the problem 
\begin{equation}\label{eq:reformulation_constraint_uncertainty3}
\begin{aligned}
    \sup_{z,\xi} & \ z \\
    s.t. \quad & f(x,y^{ij},\xi) - z \ge 0 \quad \forall i\in [R], j\in [n_\xi+1]\\
    & \xi \in \mathcal U, \ z \in \mathbb R.
\end{aligned}
\end{equation}
which uses at most $R(n_\xi+1)$ second-stage solutions. To prove the theorem we show that the optimal value of \eqref{eq:reformulation_constraint_uncertainty3} is equal to the optimal value of \eqref{eq:reformulation_constraint_uncertainty}.

First, consider the case where $x$ is an infeasible solution for \eqref{eq:2StageRO_constr}, i.e., there exists a $\xi\in \mathcal U$ such that no $y\in \mathcal Y(x)$ is feasible for the constraint system $A(\xi)x + B(\xi)y \ge h(\xi)$. \rev{It follows that at least for one recourse-stable region $\mathcal D_i$ it must hold $\mathcal Y_{\mathcal D_i}=\emptyset$ and therefore in \eqref{eq:reformulation_constraint_uncertainty3} we cannot pick any second-stage solutions for this set and the problem must be infeasible.} Hence, the optimal value of \eqref{eq:reformulation_constraint_uncertainty} and \eqref{eq:reformulation_constraint_uncertainty3} are both $\infty$ in this case.

Now, consider the case where $x$ is a feasible solution, i.e., for every $\xi\in\mathcal U$ there exists a feasible second-stage solution. In the following we denote the optimal value of Problem \eqref{eq:reformulation_constraint_uncertainty} and \eqref{eq:reformulation_constraint_uncertainty3} as $\text{opt}\eqref{eq:reformulation_constraint_uncertainty}$ and $\text{opt}\eqref{eq:reformulation_constraint_uncertainty3}$. Since $y^{ij}\in\mathcal Y(x)$ for all $i\in[R]$ and $j\in [n_\xi+1]$, it follows that $\text{opt}\eqref{eq:reformulation_constraint_uncertainty}\le \text{opt}\eqref{eq:reformulation_constraint_uncertainty3}$. 

To show the reverse inequality let $(\xi^*,z^*)$ be an optimal solution of \eqref{eq:reformulation_constraint_uncertainty3}. Then there exists an $i^*\in[R]$ such that $\xi^*\in \cl{\mathcal D_{i^*}}$. Hence, we obtain

\begin{equation*}
    \text{opt}\eqref{eq:reformulation_constraint_uncertainty3} \ = \ \begin{aligned}
        \sup_{z,\xi} \ & z \\
        s.t. \quad & f(x,y^{ij},\xi)-z\ge 0 \quad i\in [R], j\in [n_\xi +1]\\
        & \xi \in \cl{\mathcal D_{i^*}}, \ z\in\R,
    \end{aligned}
\end{equation*}
which is smaller or equal to
\begin{equation*}
    \begin{aligned}
        \sup_{z,\xi} \ & z \\
        s.t. \quad & f(x,y^{i^*j},\xi)-z\ge 0 \quad j\in [n_\xi +1]\\
        & \xi \in \cl{\mathcal D_{i^*}}, \ z\in\R.
    \end{aligned}
\end{equation*}
The optimal value of the last problem is equal to the optimal value of
\begin{equation}\label{eq:reformulation_constraint_uncertainty_4}
    \begin{aligned}
        \sup_{z,\xi} \ & z \\
        s.t. \quad & g(x,y^{i^*j},\xi)-z\ge 0 \quad j\in [n_\xi +1]\\
        & \xi \in \cl{\mathcal D_{i^*}}, \ z\in\R,
    \end{aligned}
\end{equation}
since for every $\xi\in \mathcal D_{i^*}$ it holds $f(x,y^{i^*j},\xi)=g(x,y^{i^*j},\xi)$ since $y^{i^*j}\in \mathcal Y_{\mathcal D_{i^*}}(x)$. The same can be shown for $\xi\in\cl{\mathcal D_{i^*}}\setminus \mathcal D_{i^*}$ since $h,A,B$ are affine functions in $\xi$ and hence the set of $\xi$ which fulfill the constraints $B(\xi)y \ge h(\xi)-A(\xi)x$ for a given $y\in \mathcal Y_{\mathcal D_{i^*}}(x)$ is closed and contains the set $\mathcal D_{i^*}$. Hence, it must contain $\cl{\mathcal D_{i^*}}$ and we can conclude that all solutions in $\mathcal Y_{\mathcal D_{i^*}}(x)$ are also feasible for all $\xi\in \text{cl}(\mathcal D_{i^*})$ and we can use function $g$ instead of $f$ on the whole closure.

By the definition of the support constraints the optimal value of \eqref{eq:reformulation_constraint_uncertainty_4} is equal to
\begin{equation}\label{eq:reformulation_constraint_uncertainty_5}
    \begin{aligned}
        \sup_{z,\xi} \ & z \\
        s.t. \quad & g(x,y,\xi)-z\ge 0 \quad y\in\mathcal Y_{D_{i^*}}(x)\\
        & \xi \in \cl{\mathcal D_{i^*}}, \ z\in\R.
    \end{aligned}
\end{equation}
If we can show that any optimal solution of \eqref{eq:reformulation_constraint_uncertainty_5} is feasible for \eqref{eq:reformulation_constraint_uncertainty} then we proved $\text{opt}\eqref{eq:reformulation_constraint_uncertainty3}\le \text{opt}\eqref{eq:reformulation_constraint_uncertainty}$. Consider any optimal solution $(\bar \xi,\bar z)$ for \eqref{eq:reformulation_constraint_uncertainty_5} with $\bar \xi\in \mathcal D_{i^*}$. Then it holds that $f(x,y,\bar \xi)=g(x,y,\bar \xi)$ for every $y\in\mathcal Y_{\mathcal D_{i^*}}(x)$ and $f(x,y,\bar \xi)=\infty$ otherwise. From feasibility for \eqref{eq:reformulation_constraint_uncertainty_5} it follows that the corresponding solution must be feasible for \eqref{eq:reformulation_constraint_uncertainty}. Now consider the remaining case where $\bar \xi\in \cl{\mathcal D_{i^*}}\setminus \mathcal D_{i^*}$. Then there exists an infinite sequence $\{\bar \xi_t\}_{t\in \N}$ with $\bar \xi_t\in\mathcal D_{i^*}$ and $\lim_{t\to\infty} \bar \xi_t = \bar \xi$. Set $\bar z_t = \min_{y\in \mathcal Y_{\mathcal D_{i^*}}(x)} g(x,y,\bar \xi_t)$. Then $(\bar \xi_t,\bar z_t)$ is feasible for \eqref{eq:reformulation_constraint_uncertainty_5} for every $t$ and $\lim_{t\to\infty} (\bar \xi_t,\bar z_t) = (\bar \xi, \bar z)$. Since $\bar \xi_t\in\mathcal D_{i^*}$ for every $t$, by the discussion above every $(\bar \xi_t,\bar z_t)$ is feasible for \eqref{eq:reformulation_constraint_uncertainty} and hence, the optimal value of \eqref{eq:reformulation_constraint_uncertainty} must be at least $\bar z$ which is the optimal value of \eqref{eq:reformulation_constraint_uncertainty_5}. This shows $\text{opt}\eqref{eq:reformulation_constraint_uncertainty}\ge \text{opt}\eqref{eq:reformulation_constraint_uncertainty3}$ and we proved $\text{opt}\eqref{eq:reformulation_constraint_uncertainty} = \text{opt}\eqref{eq:reformulation_constraint_uncertainty3}$.

In summary we showed that for every $x\in\mathcal X$ there exist at most $R(n_\xi+1)$ second-stage solutions such that Problems \eqref{eq:reformulation_constraint_uncertainty} and \eqref{eq:reformulation_constraint_uncertainty3} have the same optimal value which proves the result.
\end{proof}

As for the bound derived in Theorem \ref{thm:n+1_policies}, \rev{the bound on $k$ in Theorem \ref{thm:num_policies_generalcover} only depends on the dimension of the uncertain parameters}. However, the dimension $n_y$ may be hidden in the number $R$ as we will see in the following section. Note, again no convexity is required for $g$ regarding the variables $x$ and $y$.

\begin{remark}\label{rem:objective_independent_of_uncertainty}
If the objective function $g$ does not depend on $\xi$, i.e., $g(x,y,\xi)=\bar g(x,y)$, then for each recourse-stable region $\mathcal D_i$ Problem \eqref{eq:reformulation_constraint_uncertainty2} is equivalent to
\[\max_{\xi\in \cl{\mathcal D_i}} \min_{y\in \mathcal Y_{\mathcal D_i}(x)} \bar g(x,y),\]
\rev{and since the inner minimum problem does not depend on $\xi$ we can drop the maximum operator.} Hence, there exists a single solution in $\mathcal Y_{\mathcal D_i}$ which minimizes $\bar g(x,y)$  for every $\xi\in \cl{\mathcal D_i}$. It follows that in this case the bound on $k$ from Theorem \ref{thm:num_policies_generalcover} can be improved to
\[
k\ge \min\left\{ R, |\mathcal Y|\right\}.
\]
\end{remark}

Finally, we can show that for constraint-wise uncertainty and fixed-recourse we only need $k=1$ second-stage solutions for optimality. A similar result was shown for continuous decisions in \cite{marandi2018static}.

% \begin{lemma}
% Assume the objective function $g$ does not depend on $\xi$, i.e., $g(x,y,\xi)=\bar g(x,y)$. Furthermore, assume that for every $x\in\mathcal X$ there exist $R$ convex recourse-stable regions $\mathcal D_1,\ldots ,\mathcal D_R\subseteq \mathcal U$ such that $\cl{\mathcal D_1}\cup\ldots\cup\cl{\mathcal D_R} = \mathcal U$ and such that there exists a partition of $[R]$ into $T$ subsets, i.e., 
% \[
% [R] = \bigcup_{i=1}^{T} \{ \nu^i_1, \ldots ,\nu^i_{t_i}\}
% \]
% such that $\mathcal Y_{\mathcal D_{\nu^i_1}}(x)\subseteq \mathcal Y_{\mathcal D_{\nu^i_2}}(x)\subseteq \cdots \subseteq \mathcal Y_{\mathcal D_{\nu^i_{t_i}}}(x)$ for every $i\in [T]$. Then, if \[k\ge \min\left\{ T, |\mathcal Y|\right\},\] a solution $x\in\mathcal X$ is optimal for \eqref{eq:k-adaptability_constr} if and only if it is optimal for \eqref{eq:2StageRO_constr}. 
% \end{lemma}
% \begin{proof}
% The idea of the proof is the same as for Theorem \ref{thm:num_policies_generalcover}. However, due to the structure of the recourse-stable regions we can exclude a set of regions which have to be considered: Since $\mathcal Y_{\mathcal D_{\nu^i_1}}(x)\subseteq \mathcal Y_{\mathcal D_{\nu^i_2}}(x)\subseteq \cdots \subseteq \mathcal Y_{\mathcal D_{\nu^i_{t_i}}}(x)$ for every $i\in [T]$ and since $g$ does not depend on $\xi$, we know that 
% \[
% \sup_{\xi \in \mathcal D_{\nu^i_j}}\min_{y\in\mathcal Y(x)} g(x,y) \le \sup_{\xi \in \mathcal D_{\nu^i_{t_i}}}\min_{y\in\mathcal Y(x)} g(x,y)
% \]
% \end{proof}

\begin{corollary}
Assume $g$ does not depend on $\xi$, $B(\xi)=B$ for all $\xi\in \mathcal U$ and the uncertainty appears constraint-wise, i.e., we consider the problem
\[
\inf_{x\in\mathcal X} \sup_{\xi^1,\ldots ,\xi^m\in\mathcal U} \inf_{y\in\mathcal Y(x)} f(x,y,\xi^1,\ldots ,\xi^m)
\]
where the constraints are given as
\[
a_i(\xi^i)^\top x + b_i^\top y \ge h_i(\xi^i) \quad i\in [m].
\]
Then, for $k=1$ a solution for $\eqref{eq:k-adaptability_constr}$ is optimal if and only if it is optimal for \eqref{eq:2StageRO_constr}.
\end{corollary}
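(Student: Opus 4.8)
The plan is to reduce both problems, for each fixed first-stage decision $x\in\mathcal X$, to minimizing $g(x,y)$ over one common \emph{robustly feasible} set of second-stage solutions, and to show that this set is realized by a single \emph{worst-case} scenario. For each constraint $i\in[m]$ define
\[
\bar h_i(x):=\sup_{\xi^i\in\mathcal U}\bigl(h_i(\xi^i)-a_i(\xi^i)^\top x\bigr),
\]
which is attained since $\mathcal U$ is compact and $h_i,a_i$ are affine, hence continuous. Because of fixed recourse the term $b_i^\top y$ carries no dependence on $\xi$, so the $i$-th constraint $a_i(\xi^i)^\top x + b_i^\top y\ge h_i(\xi^i)$ holds for all $\xi^i\in\mathcal U$ if and only if $b_i^\top y\ge\bar h_i(x)$. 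The key point is that a maximizer $\bar\xi^i$ of $\bar h_i(x)$ does \emph{not} depend on $y$, and by constraint-wise uncertainty the worst cases for the $m$ constraints may be chosen independently; collecting them into $\bar\xi=(\bar\xi^1,\dots,\bar\xi^m)$ yields a single scenario whose feasible set $F(\bar\xi)$ equals
\[
\mathcal Y_{\mathrm{rob}}(x):=\{y\in\mathcal Y(x): b_i^\top y\ge\bar h_i(x)\ \forall i\in[m]\},
\]
the intersection over all scenarios of the per-scenario feasible sets $F(\xi):=\{y\in\mathcal Y(x): a_i(\xi^i)^\top x + b_i^\top y\ge h_i(\xi^i)\ \forall i\}$.

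First I would handle \eqref{eq:2StageRO_constr}. For fixed $x$, the inner expression $\inf_{y\in\mathcal Y(x)} f(x,y,\xi)$ equals $\inf_{y\in F(\xi)} g(x,y)$ (with the usual convention that the infimum over the empty set is $\infty$), since $f$ coincides with the $\xi$-independent objective $g$ on feasible $y$ and is $\infty$ otherwise. Because $\mathcal Y_{\mathrm{rob}}(x)=F(\bar\xi)\subseteq F(\xi)$ for every $\xi$, minimizing over the smaller set $\mathcal Y_{\mathrm{rob}}(x)$ yields a value at least as large as over any $F(\xi)$; hence the supremum over scenarios is exactly $\inf_{y\in\mathcal Y_{\mathrm{rob}}(x)} g(x,y)$ and is attained at $\bar\xi$. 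If $\mathcal Y_{\mathrm{rob}}(x)=\emptyset$, then $x$ is infeasible and both sides equal $\infty$.

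Next I would handle \eqref{eq:k-adaptability_constr} with $k=1$. For fixed $x$ and a single policy $y^1$, the inner $\sup_{\xi} f(x,y^1,\xi)$ equals $g(x,y^1)$ if $y^1$ is feasible for every $\xi$, i.e. if $y^1\in\mathcal Y_{\mathrm{rob}}(x)$, and equals $\infty$ otherwise; minimizing over $y^1$ therefore gives $\inf_{y^1\in\mathcal Y_{\mathrm{rob}}(x)} g(x,y^1)$. This matches the value obtained above for \eqref{eq:2StageRO_constr}, so the two inner problems agree for every $x\in\mathcal X$, and taking the infimum over $x$ shows that the optimal values coincide and the sets of optimal first-stage solutions are identical.

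The main obstacle is the first step, namely establishing that a single scenario $\bar\xi$ realizes the robust feasible region as $\mathcal Y_{\mathrm{rob}}(x)=F(\bar\xi)$. This is precisely where both structural hypotheses enter: fixed recourse makes the worst-case right-hand side $\bar h_i(x)$ independent of the chosen $y$, and constraint-wise uncertainty lets the individual worst cases be merged into one product scenario. Once $\mathcal Y_{\mathrm{rob}}(x)=F(\bar\xi)$ is in place, the monotonicity that an infimum taken over a smaller feasible set is larger, together with the $\xi$-independence of $g$, makes the remaining identities routine.
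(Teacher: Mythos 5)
Your proposal is correct and follows essentially the same route as the paper: for each constraint $i$ you take the scenario $\bar\xi^i$ maximizing $h_i(\xi^i)-a_i(\xi^i)^\top x$ (independent of $y$ by fixed recourse, combinable across constraints by constraint-wise uncertainty), observe that the resulting single scenario yields the smallest per-scenario feasible set, and reduce both problems to one deterministic problem. Your write-up merely spells out more explicitly than the paper why this scenario is worst-case and why both inner problems coincide with $\inf_{y\in\mathcal Y_{\mathrm{rob}}(x)} g(x,y)$.
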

\begin{proof}
    Since for every $x\in\mathcal X$ the function $h_i(\xi^i) - a_i(\xi^i)^\top x$ is continuous in $\xi^i$ and $\mathcal U$ compact, for every $i\in [m]$ there exists $\bar \xi^i\in\mathcal U$ which maximizes the latter function over $\mathcal U$ . This scenario leads to the smallest number of feasible second-stage solutions and is hence a maximizing scenario. The problem reduces to
    \begin{align*}
    \inf_{x,y} & \ g(x,y) \\
    s.t. \quad & a_i(\bar \xi^i)^\top x + b_i^\top y \ge h_i(\bar \xi^i) \quad i\in [m] \\
    & x\in \mathcal X, y\in \mathcal Y(x)
    \end{align*}
    which shows that only $k=1$ solutions are needed.
\end{proof}

The main task in the following is to bound the number of recourse-stable regions for certain problem structures to obtain good values for $R$.

\subsection{Bounds on the Number of Recourse-Stable Regions}
In this section we derive bounds on the number of recourse-stable regions which are needed to cover the uncertainty set $\mathcal U$. By Theorem \ref{thm:num_policies_generalcover} we obtain then a bound on the number of policies $k$ which are needed to get an optimal solution for \eqref{eq:2StageRO_constr}.

We assumed that all constraint parameters are given as affine functions of the uncertain parameters $\xi$, i.e., we have
\[
h(\xi) = h + H\xi, \quad A(\xi) = A + \sum_{i=1}^{n_\xi} A^i\xi_i, \quad B(\xi) = B + \sum_{i=1}^{n_\xi} B^i\xi_i
\]
where $h\in \Z^m$, $H\in \Z^{m\times n_\xi}$, $A, A^i\in \Z^{m\times n_x}$, $B, B^i\in \Z^{m\times n_y}$ for all $i\in[n_\xi]$ are given parameters. \rev{Note that the constraint parameters are assumed to be integer which can always be achieved in practice by rescaling each constraint with the least common multiple of the coefficients denominators. The integrality of the coefficients will ease the analysis below.} We can reformulate the constraint system $A(\xi)x + B(\xi)y \ge h(\xi)$ as 
\begin{equation}\label{eq:polyhedral_description_feasible_constraints}
\sum_{i=1}^{n_\xi} (A^ix + B^iy-H_i)\xi_i \ge h-Ax-By 
\end{equation}
where $H_i$ is the $i$-th column of $H$. \rev{For given $x\in \mathcal X$ and $y\in \mathcal Y(x)$ the latter inequality system, defined by $m$ half-spaces intersected with $U$, describes the set of uncertainty realizations for which the solution $(x, y)$ is feasible.} 

The main idea to derive the results of this section is the following: if we can bound the number of hyperplanes in the $\xi$-space which can appear (over all different second-stage solutions $y$) in \eqref{eq:polyhedral_description_feasible_constraints} then we can bound the number of regions which are enclosed by hyperplanes and which are not intersected by any other hyperplane. We will show that the interior of each of these regions is a convex recourse-stable region and taking the union of the closures of all these regions defines a cover for $\mathcal U$. Then we can apply Theorem \ref{thm:num_policies_generalcover} to get a bound on $k$.

Fix any $x\in\mathcal X$ and define the set of all possible hyperplanes appearing in the constraints in \eqref{eq:polyhedral_description_feasible_constraints} over all $y\in\mathcal Y(x)$ which intersect with $\mathcal U$ as
\[
\mathcal H(x) := \left\{ P_i(y)=\{\xi: a_i(x,y)^\top \xi = h_i(x,y)\} : P_i(y)\cap \mathcal U\neq \emptyset, \ y\in \mathcal Y(x), i=1,\ldots ,m\right\}
\]
where $a_i(x,y)$ is the $i$-th row of the matrix \[A(x,y) := \left(A^1x + B^1y-H_1,\ldots, A^{n_\xi}x + B^{n_\xi}y-H_{n_\xi} \right)\] and $h_i(x,y)$ is the $i$-th entry of the vector $h(x,y):=h-Ax-By$. \rev{Let $\mathcal X_{\text{feas}}$ be the set of solutions in $\mathcal X$ which are feasible for \eqref{eq:2StageRO_constr}.} Define the maximum number of hyperplanes over all $x$ which are \rev{feasible for \eqref{eq:2StageRO_constr}} as \rev{
\[
\eta:=\max\{ 1, \max_{x\in \mathcal X_{\text{feas}}} |\mathcal H(x)|\},
\]
where the definition ensures that in case no hyperplane intersects with the uncertainty set, i.e., $|\mathcal H(x)|=0$, we set $\eta=1$.}

Note that $\eta$ can be significantly smaller than $|\mathcal Y|$, e.g. if $B,B^i$ are matrices with integer values. Then $\eta$ can be bounded by terms in the size of the numbers in $B,B^i$ which we will discuss later in more detail. Consider Example \ref{ex:plot_recourse-stable_regions} to motivate the results of this section.

\begin{example} \label{ex:plot_recourse-stable_regions}
Consider the problem without first-stage solutions where the second-stage feasible region is given as
\[
\mathcal Y = \left\{ y\in \{ 0,1\}^2: -y_1+\xi_2y_2 \le \xi_1, \ y_1+3y_2 \ge \xi_2\right\}
\]
and uncertainty set $\mathcal U=[\frac{3}{2},\frac{7}{2}] \times [\frac{1}{2},2]$. We can go through all possible second-stage solutions in $\{0,1\}^2$ and draw the corresponding hyperplanes of the two constraints in $\mathcal Y$; see Figure \ref{fig:recourse_stable_regions}. The interior of the resulting regions are all recourse-stable. Only the ones which intersect with $\mathcal U$ are relevant. The feasible second-stage solutions for each region are given as
\[\mathcal Y_{\mathcal D_1}=\left\{  \begin{pmatrix} 1 \\ 1 \end{pmatrix}\right\}, \quad \mathcal Y_{\mathcal D_2}=\left\{ \begin{pmatrix} 0 \\ 1 \end{pmatrix}, \begin{pmatrix} 1 \\ 1 \end{pmatrix}\right\}, \quad \mathcal Y_{\mathcal D_3}=\left\{  \begin{pmatrix} 1 \\ 0 \end{pmatrix}, \begin{pmatrix} 0 \\ 1 \end{pmatrix}, \begin{pmatrix} 1 \\ 1 \end{pmatrix}\right\}.\]
\end{example}
\begin{figure}
    \centering
    \includegraphics[width=0.8\linewidth]{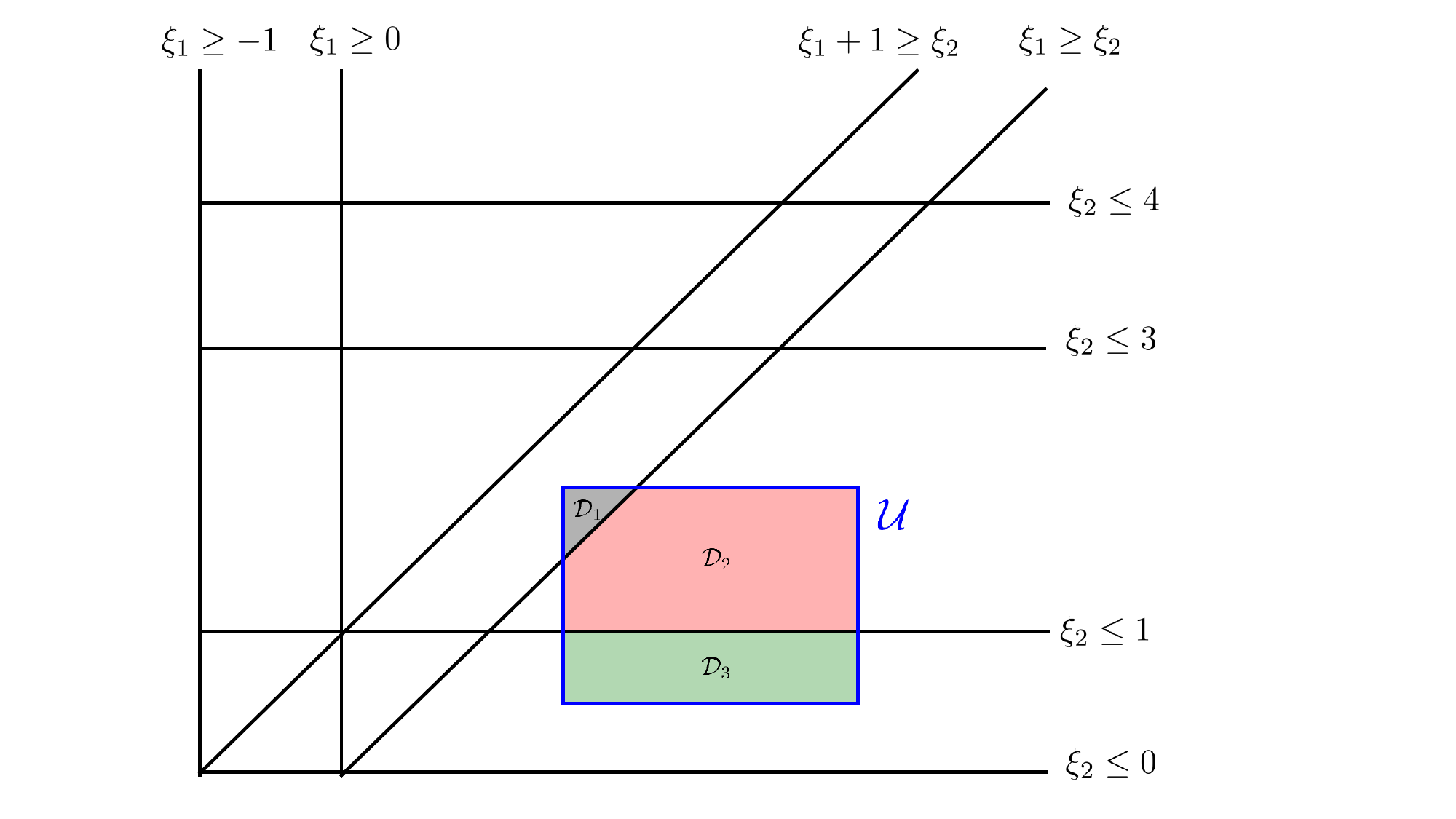}
    \caption{All hyperplanes in $\mathcal H(x)$ and corresponding recourse stable regions for Example \ref{ex:plot_recourse-stable_regions}.}
    \label{fig:recourse_stable_regions}
\end{figure}

We can prove now the following lemma.

\begin{lemma}\label{lem:bound_regions_constraint_uncertainty}
For every $x\in\mathcal X$ \rev{feasible for \eqref{eq:2StageRO_constr}} there exist at most $R\in \mathcal O\left(\eta ^{n_\xi}\right)$ convex recourse-stable regions such that the union of its closures covers $\mathcal U$.
\end{lemma}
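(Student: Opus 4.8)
The plan is to fix an arbitrary $x\in\mathcal X_{\text{feas}}$ and to obtain the recourse-stable regions directly from the hyperplane arrangement induced by $\mathcal H(x)$. Since $|\mathcal H(x)|\le\eta$ by definition, we face an arrangement of at most $\eta$ hyperplanes in the $n_\xi$-dimensional $\xi$-space. First I would discard from $\mathcal H(x)$ every hyperplane that contains $\mathcal U$: such a hyperplane corresponds to a constraint that is tight, hence satisfied, for every $\xi\in\mathcal U$, and therefore never changes the feasibility status of any second-stage solution. At most $\eta$ hyperplanes remain, each properly cutting $\mathcal U$. I would then let $C_1,\dots,C_R$ be the full-dimensional open cells of this arrangement (the connected components of $\R^{n_\xi}$ minus the union of the remaining hyperplanes) that meet $\mathcal U$, and set $\mathcal D_i:=\mathcal U\cap C_i$. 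As $\mathcal U$ and each $C_i$ are convex, every $\mathcal D_i$ is a convex subset of $\mathcal U$; and since any $\xi\in\mathcal U$ lying off all remaining hyperplanes belongs to exactly one $C_i$, the $\mathcal D_i$ are pairwise disjoint and their number is at most the number of cells of the arrangement.

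For the cardinality bound I would invoke the classical estimate for hyperplane arrangements: $\eta$ hyperplanes cut $\R^{n_\xi}$ into at most $\sum_{j=0}^{n_\xi}\binom{\eta}{j}$ full-dimensional cells, which for fixed $n_\xi$ is $\mathcal O(\eta^{n_\xi})$. Hence $R\in\mathcal O(\eta^{n_\xi})$.

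Recourse-stability of each $\mathcal D_i$ follows from a constant-sign argument. Fix $y\in\mathcal Y(x)$; by \eqref{eq:polyhedral_description_feasible_constraints} its feasibility on $\mathcal D_i$ is decided by the sign of the affine maps $\xi\mapsto a_j(x,y)^\top\xi-h_j(x,y)$, $j\in[m]$. If the associated hyperplane is one of the remaining ones, then $C_i$, being connected and disjoint from it, lies in a single open half-space, so the map keeps a strict constant sign on $\mathcal D_i$; if it was discarded, the map vanishes on all of $\mathcal U$; and if it never met $\mathcal U$, it has no zero on the connected set $\mathcal U\supseteq\mathcal D_i$ and again keeps a constant sign. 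In every case $y$ is feasible either for all $\xi\in\mathcal D_i$ or for none, which is precisely recourse-stability.

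The last and most delicate step is to prove $\bigcup_{i=1}^R\cl{\mathcal D_i}=\mathcal U$. The inclusion ``$\subseteq$'' is immediate, and every $\xi\in\mathcal U$ off all remaining hyperplanes already lies in some $\mathcal D_i$, so the real work is to reach the measure-zero set of points of $\mathcal U$ that sit on a remaining hyperplane. Here I would use that finitely many hyperplanes properly cutting $\mathcal U$ intersect $\mathcal U$ in sets of strictly smaller dimension inside $\operatorname{aff}(\mathcal U)$, so they cannot cover $\mathcal U$ and there exists a point $\xi_1\in\mathcal U$ avoiding all of them (the only exception is $\operatorname{aff}(\mathcal U)$ being a single point, where trivially $R=1$). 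For any $\xi_0\in\mathcal U$ the segment $[\xi_0,\xi_1]\subseteq\mathcal U$ meets the remaining hyperplanes in only finitely many parameter values, so the points $(1-\lambda)\xi_0+\lambda\xi_1$ lie in one fixed cell $\mathcal D_i$ for all sufficiently small $\lambda>0$; sending $\lambda\to0$ yields $\xi_0\in\cl{\mathcal D_i}$. I expect this covering argument, together with the low-dimensionality bookkeeping, to be the main obstacle, whereas the counting reduces to a cited arrangement bound and recourse-stability is a direct sign argument. Combining the four steps gives the claimed $R\in\mathcal O(\eta^{n_\xi})$.
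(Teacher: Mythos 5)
Your proof is correct and follows essentially the same route as the paper: both pass to the arrangement of the at most $\eta$ hyperplanes in $\mathcal H(x)$, bound the number of full-dimensional cells by $\sum_{j=0}^{n_\xi}\binom{\eta}{j}\in\mathcal O\left(\eta^{n_\xi}\right)$, and establish recourse-stability of each cell's interior via the constant-sign (no-hyperplane-crossing) argument. The only differences are cosmetic: the paper works with the cells of $\R^{n_\xi}$ itself, so the closures cover $\mathcal U$ trivially, and it cites the rank-refined cell count of Serra et al., whereas you intersect the cells with $\mathcal U$ and therefore supply a (correct, slightly longer) segment argument for the covering step and handle the degenerate hyperplanes explicitly.
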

\begin{proof}
Let $x\in\mathcal X$ be an arbitrary feasible solution for \eqref{eq:2StageRO_constr}. Consider $\mathcal H(x)$ which contains at most $\eta$ hyperplanes. This set of hyperplanes induces a set of full-dimensional regions, where each region is enclosed by a subset of these hyperplanes and no other hyperplane in $\mathcal H(x)$ is intersecting the region. It was shown in Lemma 4 in \cite{serra2018bounding} that this number of regions can be bounded by
\[
\sum_{i=0}^{\text{rank}(V)} \binom{\eta}{i} \in \mathcal O \left(\eta^{\text{rank}( V)}\right),
\]
where $V$ is the matrix which results from the concatenation of all row vectors $a_i(x,y)$ appearing in $\mathcal H(x)$. We can bound the rank of $V$ by $n_\xi$ which yields the number of regions stated in the lemma.

Next, we have to show that the interior of each of the induced regions is in fact a convex recourse-stable region. Convexity follows since each region is polyhedral. To show recourse-stability consider any full-dimensional region $\bar{\mathcal D}$ induced by the set of hyperplanes in $\mathcal H(x)$. For any $\xi$ in the interior of this region consider any $y\in \mathcal Y(x)$ which is feasible for this $\xi$. Then $\xi$ lies in the polyhedron \rev{described by the linear constraints in} \eqref{eq:polyhedral_description_feasible_constraints} for this $y$. Since none of the hyperplanes which defines \eqref{eq:polyhedral_description_feasible_constraints} intersects the interior of $\bar{\mathcal D}$, the whole interior of the region must be contained in \rev{this polyhedron} and hence $y$ is feasible for all $\xi$ in the interior of $\bar{\mathcal D}$. Hence, we proved that the interior of each region is recourse-stable.

Finally, we have to show that the union of the closures of these regions covers $\mathcal U$. This is trivially the case since we actually bounded the number of regions to cover the whole space $\R^{n_\xi}$.
\end{proof}
The bound on the size of the cover can be improved if we consider fixed recourse, i.e., the recourse matrix $B(\xi)$ does not depend on $\xi$.

\begin{lemma}\label{lem:bound_fixed_recourse}
Assume fixed recourse, i.e., $B(\xi)=:B$ for all $\xi \in \mathcal U$. For every $x\in\mathcal X$ \rev{feasible for \eqref{eq:2StageRO_constr}} there exists a cover of at most $R\in \mathcal O\left(\eta ^{\min\{m,n_\xi\}}\right)$ convex recourse-stable regions for $\mathcal U$.
\end{lemma}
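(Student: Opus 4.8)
The plan is to follow the proof of Lemma~\ref{lem:bound_regions_constraint_uncertainty} almost verbatim, inserting one crucial observation: under fixed recourse the normal vectors of the hyperplanes in $\mathcal H(x)$ do not depend on the second-stage solution $y$, which collapses the number of distinct hyperplane directions from potentially $\eta$ down to $m$. Concretely, I would first note that $B(\xi)=B+\sum_{i=1}^{n_\xi}B^i\xi_i$ being constant forces $B^i=0$ for all $i\in[n_\xi]$. Substituting this into the matrix
\[
A(x,y)=\left(A^1x+B^1y-H_1,\ldots,A^{n_\xi}x+B^{n_\xi}y-H_{n_\xi}\right)
\]
from the definition of $\mathcal H(x)$, the $y$-dependent terms vanish and the matrix reduces to $A(x)=\left(A^1x-H_1,\ldots,A^{n_\xi}x-H_{n_\xi}\right)$. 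Hence every hyperplane in $\mathcal H(x)$ has normal vector equal to one of the $m$ rows $a_1(x),\ldots,a_m(x)$ of $A(x)$; only the offsets $h_i(x,y)$ vary with $y$, so the hyperplanes arising from a fixed constraint index $i$ are mutually parallel.

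Next I would rerun the region-counting step from Lemma~\ref{lem:bound_regions_constraint_uncertainty}. Let $V$ be the matrix whose rows are the normals of the hyperplanes appearing in $\mathcal H(x)$. By the previous paragraph every such row is one of the $m$ fixed vectors $a_i(x)$, so $V$ has at most $m$ distinct rows, and since it has $n_\xi$ columns we obtain $\text{rank}(V)\le \min\{m,n_\xi\}$. Applying Lemma~4 of \cite{serra2018bounding} to the arrangement $\mathcal H(x)$ then bounds the number of full-dimensional cells by
\[
\sum_{i=0}^{\text{rank}(V)}\binom{\eta}{i}\in\mathcal O\!\left(\eta^{\,\text{rank}(V)}\right)\subseteq\mathcal O\!\left(\eta^{\min\{m,n_\xi\}}\right),
\]
which is exactly the improved exponent claimed in the lemma.

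Finally, the two remaining assertions — that the interior of each cell is a convex recourse-stable region, and that the closures of these cells cover $\mathcal U$ — are inherited word-for-word from the proof of Lemma~\ref{lem:bound_regions_constraint_uncertainty}, since those arguments only use that the cells are the full-dimensional regions induced by a hyperplane arrangement and never refer to the specific form of the normals. I do not expect a genuine obstacle here: the entire content of the improvement is the rank reduction $\text{rank}(V)\le\min\{m,n_\xi\}$, and the only point requiring care is to make explicit that fixed recourse makes the hyperplane normals $y$-independent (so the $m$ constraints contribute at most $m$ distinct directions rather than one direction per $(i,y)$ pair). Once that observation is in place, the sharper bound on $R$ falls out of the same arrangement estimate used before.
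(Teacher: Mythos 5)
Your proposal is correct and follows essentially the same route as the paper: both observe that fixed recourse forces $B^i=0$, so the normal matrix $A(x,y)$ becomes independent of $y$, which bounds $\text{rank}(V)$ by $\min\{m,n_\xi\}$, and then both invoke the same arrangement-counting bound and inherit the recourse-stability and covering arguments from Lemma~\ref{lem:bound_regions_constraint_uncertainty}. Your additional remark that the hyperplanes from a fixed constraint index are mutually parallel is a nice explicit gloss but does not change the argument.
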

\begin{proof}
In the case of fixed recourse, we have $B^i=0$ for all $i\in [n_\xi]$. Hence, $A(x,y)$ is the same matrix for every $y \in\mathcal Y(x)$. Following the proof of Lemma \ref{lem:bound_regions_constraint_uncertainty} the number of regions for the cover can be bounded by $\mathcal O \left(\eta^{\text{rank}(V)}\right)$ where $V$ is the matrix derived from concatenating the same matrix $A(x,y)$ multiple times. Hence, $V$ has rank at most $\min\{m,n_\xi\}$. Following the rest of the proof of Lemma \ref{lem:bound_regions_constraint_uncertainty} proves the result.
\end{proof}

We can summarize the latter results now in the following theorem.
\begin{theorem}\label{thm:bounds_k_constraint_uncertainty}
\rev{Let $\mathcal U$ be convex and} $g:\mathcal X\times \mathcal Y\times \mathcal U\to \mathbb R$ a continuous function such that $g(x, y, \xi)$ is concave in $\xi$ for every $x\in \mathcal X, y\in\mathcal Y$. Then, the number of second-stage policies needed in \eqref{eq:k-adaptability_constr} to ensure an optimal solution for \eqref{eq:2StageRO_constr} is
\begin{itemize}
    \item $k\in  O\left(\eta^{\min\{m,n_\xi\}}(n_\xi +1)\right)$ if fixed recourse holds 
    \item $k\in \mathcal O\left(\eta^{n_\xi}(n_\xi +1)\right)$ if random recourse holds. 
\end{itemize}
\end{theorem}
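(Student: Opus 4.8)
The plan is to obtain this theorem simply by combining the general policy bound of Theorem \ref{thm:num_policies_generalcover} with the two covering bounds established in Lemmas \ref{lem:bound_regions_constraint_uncertainty} and \ref{lem:bound_fixed_recourse}. Recall that Theorem \ref{thm:num_policies_generalcover} states that if, for every first-stage decision, the uncertainty set $\mathcal U$ can be covered by the closures of at most $R$ convex recourse-stable regions, then $k \ge \min\{R(n_\xi+1), |\mathcal Y|\}$ policies suffice for the optimal solutions of \eqref{eq:k-adaptability_constr} and \eqref{eq:2StageRO_constr} to coincide. Thus the whole argument reduces to plugging in the concrete values of $R$ provided by the two lemmas and reading off the resulting asymptotics.

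First I would address the role of feasibility, which is the one place that requires a little care. Lemmas \ref{lem:bound_regions_constraint_uncertainty} and \ref{lem:bound_fixed_recourse} only guarantee a cover of size $R$ for those $x \in \mathcal X$ that are feasible for \eqref{eq:2StageRO_constr}, whereas Theorem \ref{thm:num_policies_generalcover} is phrased for every $x \in \mathcal X$. The key observation is that an infeasible first-stage decision stays infeasible for \eqref{eq:k-adaptability_constr} for every value of $k$: if some $\xi \in \mathcal U$ admits no feasible second-stage response, then no choice of $k$ policies can cover that scenario, so the objective is $\infty$ in both problems regardless of $k$. Since we assumed that a feasible first-stage decision with finite value exists, every optimal solution is feasible, and it therefore suffices to establish the policy bound on the restricted set $\mathcal X_{\text{feas}}$, which is exactly where the lemmas apply.

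Next I would simply substitute. In the random-recourse case Lemma \ref{lem:bound_regions_constraint_uncertainty} gives $R \in \mathcal O(\eta^{n_\xi})$, and combining this with the factor $n_\xi+1$ from Theorem \ref{thm:num_policies_generalcover} yields $k \in \mathcal O(\eta^{n_\xi}(n_\xi+1))$. In the fixed-recourse case Lemma \ref{lem:bound_fixed_recourse} sharpens the cover bound to $R \in \mathcal O(\eta^{\min\{m,n_\xi\}})$, since the matrix $A(x,y)$ no longer depends on $y$ and its rank is bounded by $\min\{m,n_\xi\}$; the same substitution then gives $k \in \mathcal O(\eta^{\min\{m,n_\xi\}}(n_\xi+1))$. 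Both bounds are capped by $|\mathcal Y|$ through the minimum appearing in Theorem \ref{thm:num_policies_generalcover}, which is harmlessly absorbed into the $\mathcal O$-notation because the trivial choice $k=|\mathcal Y|$ always guarantees equivalence.

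I do not expect a genuine obstacle here, as the statement is essentially a corollary assembling earlier results; the only point needing attention is the feasibility bookkeeping described above, ensuring that restricting the covering guarantee to feasible first-stage decisions is without loss, because optimal solutions are necessarily feasible and infeasible ones remain infeasible under \eqref{eq:k-adaptability_constr} independently of $k$.
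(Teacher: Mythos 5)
Your proposal is correct and follows essentially the same route as the paper, whose proof is exactly the one-line combination of Theorem \ref{thm:num_policies_generalcover} with Lemmas \ref{lem:bound_regions_constraint_uncertainty} and \ref{lem:bound_fixed_recourse}. Your additional remark on why restricting the covering lemmas to feasible first-stage decisions is harmless (infeasible $x$ yield value $\infty$ in both problems for every $k$) is a detail the paper leaves implicit, and it is handled correctly.
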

\begin{proof}
    The result directly follows from Lemma \ref{lem:bound_regions_constraint_uncertainty} and \ref{lem:bound_fixed_recourse} together with Theorem \ref{thm:num_policies_generalcover}.
\end{proof}

\rev{
\begin{remark}
\textcolor{black}{By Remark \ref{rem:objective_independent_of_uncertainty} it follows that, if $g$ does not depend on $\xi$, we can improve the bounds to $k\in  O\left(\eta^{\min\{m,n_\xi\}}\right)$ if fixed recourse holds and $k\in \mathcal O\left(\eta^{n_\xi}\right)$ if random recourse holds.} 
\end{remark}
}

\rev{
\begin{remark}\label{rem:bound_k_constraint_rank}
In the case of fixed recourse, the exponent $\min\{ m,n_\xi\}$ appearing in Theorem \ref{thm:bounds_k_constraint_uncertainty} is a (potentially) loose bound for the rank of the matrix
\[
A(x) := \left(A^1x -H_1,\ldots, A^{n_\xi}x -H_{n_\xi} \right)
\] 
used in the proof of Lemma \ref{lem:bound_fixed_recourse}. To achieve tighter bounds on $k$ we can use
\[
\omega:= \max_{x\in \mathcal X} \text{rank}\left( A(x)\right) \le \min\{ m,n_\xi\},
\]
leading to the bound $k\in \mathcal O\left( \eta^\omega (n_\xi +1) \right)$ in case of fixed recourse which reduces to $k\in \mathcal O\left( \eta^\omega\right)$ if the objective function does not depend on $\xi$.
\end{remark}
The following example shows, that the bound of Remark \ref{rem:bound_k_constraint_rank} is tight.
\begin{example}
Consider the following $k$-adaptability problem without first-stage variables $x$,
\[
\min_{y^1,\ldots ,y^k\in \mathcal Y} \max_{\xi\in \mathcal U} \min_{i=1,\ldots ,k} f(y)
\]
where $\mathcal U=[0,n_y]^{n_\xi}$, $\mathcal Y=\{ 0,1\}^{n_y}$ and 
\[
f(y,\xi):=\begin{cases}
0 & \text{if } \ \sum_{i=1}^{n_y} y_i \le \xi_1\le \sum_{i=1}^{n_y} y_i  + 1 \\
\infty & \text{otherwise.}
\end{cases}
\]
We can rewrite the constraint system as
\begin{align*}
\xi_1 & \ge \sum_{i=1}^{n_y} y_i \\
-\xi_1 & \ge -\sum_{i=1}^{n_y} y_i + 1
\end{align*}
and since $y_i$ are binary we have $\sum_{i=1}^{n_y} y_i\in \{ 0,1,\ldots ,n_y\}$. Hence, the latter constraint system produces $\eta=n_y+1$ hyperplanes which intersect with $\mathcal U$. At the same time the rank $\omega$ defined in Remark \ref{rem:bound_k_constraint_rank} is one and since the objective function does not depend on $\xi$ we obtain the bound $k\in \mathcal O\left( n_y\right)$. Note that every set 
\[
\mathcal D_j:=\left\{ \xi\in \mathcal U: \xi_1 \in [j-1, j)\right\}, \ j=1,\ldots ,n_y+1,
\]
is a recourse-stable region where the set of feasible solutions is
\[
\mathcal Y_{\mathcal D_j}=\left\{ y\in \{ 0,1\}^{n_y}: \sum_{i=1}^{n_y} y_i = j-1\right\} , \ j=1,\ldots ,n_y+1.
\]
If we pick one feasible solution for each recourse-stable region we achieve an objective value of $0$ which is optimal. If we do not choose a feasible solution for at least one of the recourse-stable regions the objective value is $\infty$. Hence we need 
$k^*=n_y+1$ many solutions for optimality which coincides with the bound from Remark \ref{rem:bound_k_constraint_rank}.
\end{example}
\begin{example}
To show that the bound is even tight if the objective function depends on $\xi$ we adapt the latter example. The idea is to use Example \ref{ex:bound_k_tight_objective} for every of the recourse-stable regions of the latter example. Consider $\mathcal U' = \left\{ \xi\in [0,n_y]: \sum_{i=2}^{n_\xi} \xi_i = 1\right\}$ and
$$\mathcal Y'=\left\{ (y,z)\in \{ 0,1\}^{n_y} \times \{ 0,1\}^{n_\xi -1}: \sum_{j=1}^{n_\xi -1} z_j=1 \right\}$$
and 
\[
f(y,z,\xi):=\begin{cases}
\sum_{j=1}^{n_\xi-1} \xi_{j+1} z_j  & \text{if } \ \sum_{i=1}^{n_y} y_i \le \xi_1\le \sum_{i=1}^{n_y} y_i  + 1 \\
\infty & \text{otherwise.}
\end{cases}
\]
Since the constraints did not change, we have the same $n_y+1$ recourse-stable regions on the $y$-variables as in the latter example. By following Example \ref{ex:bound_k_tight_objective}, to be optimal, on each of the regions we have to choose $n_\xi-1$ different second stage solutions $(y^j,e_1), (y^j,e_2), \ldots ,(y^j,e_{n_\xi-1} )$ where $y^j$ is feasible for $\mathcal D_j$. This leads to $k\in \mathcal O\left( n_\xi n_y\right)$ many second-stage solutions needed to achieve optimality which coincides with the bound of Remark \ref{rem:bound_k_constraint_rank}.
\end{example}
}

\rev{\paragraph*{Procedure to Calculate a Bound on $\eta$}
Calculating the value $\eta$ exactly is tricky since it depends on the specific problem structure. In the following we present a simple procedure to get valuable bounds on $\eta$ for the case that $\mathcal X\subset \mathbb Z^{n_x}$.
\begin{enumerate}
\item Scale the given constraint system
\[
\sum_{i=1}^{n_\xi} (A^ix + B^iy-H_i)\xi_i \ge h-Ax-By 
\]
such that all entries of $A^i$,$A$,$B$,$B^i$, $H_i$ and $h$ are integer.
\item Calculate for every index $i\in [n_\xi]$ the values
\begin{align*}
\bar v_i & := \max_{l\in [m]} \left( \max_{x\in \mathcal X} x^\top a_l^i + \max_{y\in\mathcal Y} y^\top b_l^i - H_{li}\right) \\
\underline{v_i} & := \min_{l\in [m]} \left( \min_{x\in \mathcal X} x^\top a_l^i + \min_{y\in\mathcal Y} y^\top b_l^i - H_{li}\right)
\end{align*}
where $a_l^i, b_l^i$ denote the $l$-th row of the corresponding matrices $A^i$ and $B^i$.
\item Calculate the the right-hand-side value
\begin{align*}
\bar v_0 & := \max_{l\in [m]} \left( h_l + \max_{x\in \mathcal X} -x^\top a_l + \max_{y\in\mathcal Y} -y^\top b_l \right) \\
\underline{v_0} & := \min_{l\in [m]} \left( h_l + \min_{x\in \mathcal X} -x^\top a_l + \min_{y\in\mathcal Y} -y^\top b_l \right) 
\end{align*}
where $a_l, b_l$ denote the $l$-th row of the corresponding matrices $A$ and $B$.
\item Then it follows $\eta \le \prod_{i=0}^{n_\xi}( \bar v_i - \underline{v_i} +1)$
\end{enumerate}
The idea of the algorithm is to calculate the maximum and the minimum value of the coefficients of each variable $\xi_i$ in the constraint system which can appear over all $x\in\mathcal X$ and $y\in\mathcal Y$ (Step 2). Due to the scaling in Step 1, and since $x$ and $y$ have only integer values, all these coefficient values have to be integer and hence the number of possible coefficients of variable $\xi_i$ is $\bar v_i - \underline{v_i} +1$. After the same argumentation for the right-hand-side (Step 3) we calculate all combinations of parameter values in Step 4, which leads to a valid bound for $\eta$.

Note that in case $\mathcal X$ is not restricted to integer values, a similar procedure can be used which only calculates the number of values which the terms $B^iy$ and $By$ can take in each row $l$ by calculating
\begin{equation}\label{eq:eta_procedure_betai_definition}
\beta^i_l := \max_{y\in \mathcal Y} y^\top b_l^i - \min_{y\in\mathcal Y}y^\top b_l^i + 1 \ \forall i\in[n_\xi], \quad \beta^0_l:=\max_{y\in \mathcal Y} y^\top b_l - \min_{y\in\mathcal Y}y^\top b_l + 1.
\end{equation}
Taking all combinations of coefficients leads to the number of hyperplanes $\prod_{i=0}^{n_\xi}\beta^i_l$ which can appear in row $l$. However, in this case we have to sum up the number of possible hyperplanes over all rows, since we do not have information about the full coefficient value, which would involve $x$, resulting in $\eta \le \sum_{l=1}^{m}\prod_{i=0}^{n_\xi}\beta^i_l$. While the latter method could also be applied to the case of integer $x$, it leads to worse bounds if the number of constraints in the constraint system is large.

\begin{example}\label{ex:rhs-number-of-values}
Assume $\mathcal Y\subseteq \Z_+^{n_y}\cap [0,u]$ where $u\in \Z_+^{n_y}$ are given upper bounds on the second-stage decision variables. Furthermore, assume $B, B^i\in \Z^{n_y}$ for all $i$ which can be achieved by scaling the constraint system. If $\mathcal X$ is mixed-integer we bound the values \eqref{eq:eta_procedure_betai_definition} as
\[
\beta^i_l = (2 u^\top |b_l^i| +1) \ \forall i\in [n_\xi], \quad \beta^0_l =  2 (u^\top |b_l| + 1)
\]
where $|b_l^i|$ denotes the vector containing the absolute values of each entry. This leads to $\prod_{i=0}^{n_\xi} \beta^i_l$ many hyperplanes for constraint $l$. Hence, in total we can bound 
\[
\eta^{rr} \le \sum_{l=1}^{m} \prod_{i=0}^{n_\xi} \beta^i_l
\]
in case of random recourse. In case of fixed recourse we have $b_l^i = 0$ and hence $\beta^i_l=1$ for all $i\in [n_\xi]$ and we obtain
\[
\eta^{fr} \le \sum_{l=1}^{m}  \beta^0_l.
\]
\end{example}
}

We now present applications studied in other works and apply the previous results.

\begin{example}
Consider the capital budgeting problem with constraint uncertainty as defined in Section \ref{sec:problem_definitions}. We assume that all entries of $\Phi$ and $c^0$ are integer, which can be obtained after scaling.

\rev{We can reformulate the budget constraint as 
\begin{equation}\label{eq:constraint_capital_budgeting}
  \sum_{j=1}^{\rho} \left( \sum_{i=1}^{n} c_i^0(x_i+y_i)\Phi_{ij}\right) \xi_j \le B - \sum_{i=1}^{n} c_i^0(x_i+y_i)  
\end{equation}
To apply Theorem \ref{thm:bounds_k_constraint_uncertainty} we have to calculate $\eta$. Since $\mathcal X\subseteq\{ 0,1\}^n$ we can use the $4$-step procedure above to get a bound. Step 2 results in the values
\begin{align*}
\bar v_i & = \max_{x\in\mathcal X} \sum_{i=1}^{n} c_i^0 \Phi_{ij}x_i + \max_{y\in\mathcal Y} \sum_{i=1}^{n} c_i^0 \Phi_{ij}y_i \le 2 \sum_{i=1}^{n} |c_i^0 \Phi_{ij}| \\
\underline v_i &= \min_{x\in\mathcal X} \sum_{i=1}^{n} c_i^0 \Phi_{ij}x_i + \min_{y\in\mathcal Y} \sum_{i=1}^{n} c_i^0 \Phi_{ij}y_i \ge -2 \sum_{i=1}^{n} |c_i^0 \Phi_{ij}|
\end{align*}
and similarly in Step 3 we obtain
\begin{align*}
\bar v_0 & \le B + 2\sum_{i=1}^{n}|c_i^0| \\
\underline{v_0} & \ge B - 2\sum_{i=1}^{n}|c_i^0|
\end{align*}
which after applying Step 4 results in 
\[
\eta \le \underbrace{\left(4\sum_{i=1}^{n} |c_i^0| +1 \right)}_{=\bar c_0}\prod_{i=1}^{\rho} \underbrace{\left(4\sum_{i=1}^{n} |c_i^0 \Phi_{ij}| +1 \right)}_{\le \bar c}.
\]
From Theorem \ref{thm:bounds_k_constraint_uncertainty} it follows that we need at most $k\in \mathcal O\left((\bar c_0 \bar c^\rho)^\rho(\rho+1)\right)$ second-stage policies to ensure optimality. From Remark \ref{rem:objective_independent_of_uncertainty} it follows, that if the objective parameters are not uncertain this bound improves to $k\in \mathcal O\left((\bar c_0 \bar c^\rho)^\rho\right)$. Note again that $\rho$ is usually a fixed and small number and the actual number of possible second-stage solutions can be $|\mathcal Y|=2^{n}$. Hence, for a fixed number of risk factors the number of second-stage solutions grows exponential in the dimension $n$ while the bound on $k$ remains constant.
}
\end{example}

\begin{example}
\rev{
Consider a capacitated facility location problem as defined in Section \ref{sec:problem_definitions}. To calculate $\eta$ we apply the $4$-step procedure above. We can reformulate the constraint system which involves the uncertain parameters as
\[
\sum_{l=1}^{n_\xi} (c_{l} x_i)\xi_l \ge \sum_{j\in\mathcal J} d_jy_{ij} \quad \forall i\in \mathcal I.
\] 
Since $x_i\in \{ 0,1\}$ the coefficients of the left side can only be all zero or fixed to $c$. In the first case the right-hand-side is also zero and hence we can ignore these constraints. Hence, we only have one option for the left-hand-side coefficients. Since for any $x\in \mathcal X$ at most $p$ entries of $x$ can be non-zero, the number of constraints reduces to $p$. For the right-hand-side we obtain 
\begin{align*}
\bar v_0 & = \max_{i\in \mathcal I}\max_{y\in \mathcal Y} \sum_{j\in\mathcal J} d_jy_{ij} \le  D \\
\underline{v_0} & =  \min_{i\in \mathcal I}\min_{y\in \mathcal Y} \sum_{j\in\mathcal J} d_jy_{ij} = 0
\end{align*} 
where $D = \sum_{j\in\mathcal J}d_j$. We obtain in Step 4
\[
\eta \le D+1
\]
and applying Theorem \ref{thm:bounds_k_constraint_uncertainty} show that we need at most $k\in \mathcal O\left( (D+1)^p(n_\xi + 1)\right)$ second-stage policies. From Remark \ref{rem:objective_independent_of_uncertainty} it follows, that if the travel costs are not uncertain this bound improves to $k\in\mathcal O\left( (D+1)^p\right)$. In contrast, the number of second-stage solutions can be $|\mathcal Y|=p^{|\mathcal J|}$ since every customer in $\mathcal J$ can be assigned to at most $p$ facilities. Usually, the parameter $p$ is a small constant in facility location problems. If we now assume that $d_j=d$ for all $j$ then the latter bound on $k$ reduces to  $\mathcal O(|\mathcal J|^pd^p)$. For a fixed $p$ this bound grows polynomial in the number of customers while the number of second-stage solutions grows exponential in the number of customers. In Table \ref{tbl:values_facility_location} we compare both values for $p=5$ and $d=10$ for different number of customers.

\begin{table}[h!]
\centering
\begin{tabular}{|c|c|c|c|c|}
\hline
\textbf{$|\mathcal J|$} & \textbf{10} & \textbf{50} & \textbf{100} & \textbf{250} \\
\hline
$|\mathcal J|^pd^p$ & $ 10^{10}$ & $\approx 3.1\cdot 10^{13}$ & $ 10^{15}$ & $\approx 9.8\cdot 10^{16}$ \\
\hline
$p^{|\mathcal J|}$ & $\approx 9.8\cdot 10^6$ & $\approx 8.9\cdot 10^{34}$ & $\approx 7.9\cdot 10^{69}$ & $\approx 5.5\cdot 10^{174}$\\
\hline
\end{tabular}
\caption{Comparison of the bound on $k$ and the number of second-stage solutions for $p=5$ and $d=10$.}\label{tbl:values_facility_location}
\end{table}
}
\end{example}

\begin{remark}
The number of right-hand-side values in the $4$-step procedure can be improved in some situations. Note that for every right-hand-side value in \eqref{eq:polyhedral_description_feasible_constraints} which is larger than all left-hand-side values, the corresponding regions do not intersect with $\mathcal U$. On the other hand if the right-hand-side value is smaller than the smallest left-hand-side value the regions contain the full set $\mathcal U$. Hence, we do not need to consider these values in $\mathcal H(x)$.
% It follows that for 
% \[
% \gamma:=\max_{i=1,\ldots ,m}\max_{x\in\mathcal X}\max_{\xi\in \mathcal U} \ 2\left| \sum_{i=1}^{n_\xi} (A^ix + B^iy-H_i)_i\xi_i - h_i+ (Ax)_i\right|
% \]
% we can now bound $\eta \le m\gamma$.
\end{remark}

\rev{
Theorem \ref{thm:bounds_k_constraint_uncertainty} can be combined with Theorem \ref{thm:approximation_bound} to obtain approximation bounds for the constraint uncertainty case for certain values for $k$.
\begin{theorem}\label{thm:approximation_bound_constr}
Let $\mathcal U$ be convex and $g:\mathcal X\times \mathcal Y\times \mathcal U\to \mathbb R$ a continuous function such that $g(x, y, \xi)$ is concave in $\xi$ for every $x\in \mathcal X, y\in\mathcal Y$ and Lipschitz continuous in $y$, with Lipschitz constant $L>0$. Furthermore, assume that for every $x\in\mathcal X$ there exist $R$ convex recourse-stable regions $\mathcal D_1,\ldots ,\mathcal D_R\subseteq \mathcal U$ such that $\cl{\mathcal D_1}\cup\ldots\cup\cl{\mathcal D_R} = \mathcal U$. Then, for any $s\in \mathbb N$ with $s\le n_\xi + 1$ it holds
\[
\text{opt}(Rs)-\text{opt}(2RO) \le L \text{diam}(\mathcal Y) \rev{\ln\left(\frac{n_\xi+1}{s}\right)}.
\]
\end{theorem}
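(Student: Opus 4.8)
The plan is to run the reduction argument of Theorem \ref{thm:approximation_bound} separately on each recourse-stable region and then glue the resulting policies together into a single $Rs$-adaptable solution. First I would fix an optimal first-stage decision $x^*$ for \eqref{eq:2StageRO_constr}, so that $\text{opt}(\text{2RO}) = \sup_{\xi \in \mathcal U} \inf_{y \in \mathcal Y(x^*)} f(x^*,y,\xi)$, and take the guaranteed cover $\mathcal D_1,\ldots,\mathcal D_R$ of $\mathcal U$ by convex recourse-stable regions for this $x^*$. The crucial observation is that, restricted to a single region, the problem is a genuine objective-uncertainty instance: for $\xi \in \cl{\mathcal D_i}$ one has $\inf_{y\in\mathcal Y(x^*)} f(x^*,y,\xi) = \inf_{y\in\mathcal Y_{\mathcal D_i}(x^*)} g(x^*,y,\xi)$, exactly as established inside the proof of Theorem \ref{thm:num_policies_generalcover} (the feasible policies $\mathcal Y_{\mathcal D_i}(x^*)$ stay feasible on the whole closure, where $f$ coincides with the continuous concave function $g$). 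Hence the region value $V_i := \sup_{\xi\in\cl{\mathcal D_i}} \inf_{y\in\mathcal Y_{\mathcal D_i}(x^*)} g(x^*,y,\xi)$ is the optimal value of an objective-uncertainty $k$-adaptability problem with convex compact uncertainty set $\cl{\mathcal D_i}$, finite policy set $\mathcal Y_{\mathcal D_i}(x^*)$, and objective $g$, and since $\cl{\mathcal D_i}\subseteq\mathcal U$ it satisfies $V_i \le \text{opt}(\text{2RO})$.

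Next I would apply Theorem \ref{thm:approximation_bound} to this per-region instance. By the support-constraint argument of Theorem \ref{thm:num_policies_generalcover}, $n_\xi+1$ policies already attain $V_i$, so taking $n_\xi+1$ in the role of ``$k$'' in Theorem \ref{thm:approximation_bound} yields, for every $s\le n_\xi+1$, policies $y^{i,1},\ldots,y^{i,s}\in\mathcal Y_{\mathcal D_i}(x^*)$ (using duplicated policies if $\mathcal Y_{\mathcal D_i}(x^*)$ contains fewer than $s$ elements) such that $\sup_{\xi\in\cl{\mathcal D_i}} \inf_{j\in[s]} g(x^*,y^{i,j},\xi) \le V_i + L\,\text{diam}(\mathcal Y)\ln\!\left(\tfrac{n_\xi+1}{s}\right) \le \text{opt}(\text{2RO}) + L\,\text{diam}(\mathcal Y)\ln\!\left(\tfrac{n_\xi+1}{s}\right)$. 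Here the inequality $\text{diam}(\mathcal Y_{\mathcal D_i}(x^*)) \le \text{diam}(\mathcal Y)$ keeps the approximation constant uniform across all regions.

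Finally I would assemble the $Rs$ policies $\{y^{i,j}: i\in[R],\, j\in[s]\}$ together with $x^*$ into a feasible solution of \eqref{eq:k-adaptability_constr} with $k=Rs$. For an arbitrary $\xi\in\mathcal U$ the cover property gives some $i$ with $\xi\in\cl{\mathcal D_i}$, and since adding policies can only decrease the inner infimum, $\inf_{i',j} f(x^*,y^{i',j},\xi) \le \inf_{j\in[s]} f(x^*,y^{i,j},\xi) = \inf_{j\in[s]} g(x^*,y^{i,j},\xi)$, the last equality because the chosen policies are feasible on all of $\cl{\mathcal D_i}$. Taking the supremum over $\xi\in\mathcal U$ and using $\mathcal U = \bigcup_i \cl{\mathcal D_i}$ turns this into a maximum over regions of the per-region worst cases, each bounded by $\text{opt}(\text{2RO}) + L\,\text{diam}(\mathcal Y)\ln\!\left(\tfrac{n_\xi+1}{s}\right)$; since $\text{opt}(Rs)$ is at most the objective value of the constructed solution, the claimed inequality follows.

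The main obstacle is the gluing step: one must verify that the global worst case over $\mathcal U$ decomposes as the maximum of the per-region worst cases, which relies on the cover being formed by the \emph{closures} of the regions and on the identity $f=g$ along each closure, and that the per-region constant remains $L\,\text{diam}(\mathcal Y)$ uniformly. The per-region invocation of Theorem \ref{thm:approximation_bound} itself, including the treatment of duplicated policies when a region admits fewer than $s$ feasible recourse actions, is otherwise routine.
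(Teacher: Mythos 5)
Your proposal is correct and follows essentially the same route as the paper: decompose $\mathcal U$ into the closures of the recourse-stable regions, run the weight-sorting truncation argument of Theorem \ref{thm:approximation_bound} on the $n_\xi+1$ support policies of each region, and glue via the cover --- the paper merely inlines that truncation argument (minimax swap, sorted $\lambda$'s, $\bar\lambda_i\le 1/i$, integral bound) instead of invoking Theorem \ref{thm:approximation_bound} as a per-region subroutine. One small imprecision: the identity $\inf_{y\in\mathcal Y(x^*)}f(x^*,y,\xi)=\inf_{y\in\mathcal Y_{\mathcal D_i}(x^*)}g(x^*,y,\xi)$ can fail (only ``$\le$'' holds) at boundary points $\xi\in\cl{\mathcal D_i}\setminus\mathcal D_i$, but the inequality $V_i\le\text{opt}(\text{2RO})$ that your argument actually needs is exactly what the limiting-sequence step in the proof of Theorem \ref{thm:num_policies_generalcover} delivers, so nothing breaks.
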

\begin{proof}
From Theorem \ref{thm:num_policies_generalcover} it follows that
\[
\text{opt}(2RO) = \text{opt}(R(n_\xi+1))
\]
and hence there exists an optimal solution $(\bar x,\bar y^{1},\ldots ,\bar y^{ R(n_\xi+1)})$ of the $R(n_\xi+1)$-adaptability problem which has the same optimal value as \eqref{eq:2StageRO_constr}. For ease of notation we denote $\bar y^{(t-1)(n_\xi +1) + i}=:\bar y^{t}_i$ for each $t\in [R]$. Following the proof of Theorem \ref{thm:bounds_k_constraint_uncertainty} we can w.l.o.g. assume that  $\bar y^{t}_1, \ldots ,\bar y^{t}_{n_\xi +1}$ are feasible for all $\xi\in \cl{\mathcal D_t}$ for every $t\in [R]$.

The idea of the proof is to remove all but $s$ second-stage solutions for each recourse stable region, leading to $Rs$ solutions. Since the maximum is attained over one of the recourse stable regions we can use the same bounding procedure as in Theorem \ref{thm:approximation_bound} which leads to the desired bound.

Similar as in the proof of Theorem \ref{thm:approximation_bound} we can reformulate for every $t\in [R]$
\[
\max_{\xi\in \cl{\mathcal D_t}}\min_{i=1,\ldots ,n_\xi+1} g(\bar x,\bar y^t_i,\xi) =   \max_{\xi\in \cl{\mathcal D_t}} \min_{\substack{\lambda \in \mathbb R_+^{n_\xi+1} \\ \sum_{i=1}^{n_\xi+1} \lambda_i = 1}} \ \sum_{i=1}^{n_\xi+1} \lambda_i g(\bar x,\bar y^t_i,\xi).
\]
Since $g$ is concave in $\xi$ and $\lambda \ge 0$, also the function $\sum_{i=1}^{n_\xi+1} \lambda_i g(x,y^i,\xi)$ is concave in $\xi$. We can apply the classical minimax theorem and swap the inner maximum and minimum operator which leads to the reformulation 
\[
    \min_{\substack{ \lambda \in \mathbb R_+^{n_\xi+1} \\ \sum_{i=1}^{n_\xi+1} \lambda_i = 1}} \max_{\xi\in \cl{\mathcal D_t}}\ \sum_{i=1}^{n_\xi+1} \lambda_i g(\bar x,\bar y^t_i,\xi).
\]
Let $\bar \lambda^{t}$ be an optimal solution of the latter problem and assume w.l.o.g. that $\bar \lambda_1^{t}\ge \ldots \ge \bar \lambda_{n_\xi+1}^{t}$. We define a feasible solution for the $Rs$-adaptability problem as
\[
x = \bar x, \quad y^{t}_{1} =\bar y^{t}_{1}, \ldots , y^{t}_{s}=\bar y^{t}_{s},
\]
and
\[
\lambda^t_1=\bar \lambda_1^{t}, \ldots , \lambda^t_{s-1} = \bar \lambda_{s-1}^{t}, \ \lambda^t_s = \sum_{i=s}^{n_\xi+1}\bar \lambda_i^{t}.
\]
Then, we have
\begin{align*}
\text{opt}(Rs)-\text{opt}(2RO) & \le \max_{t=1,\ldots ,R} \max_{\xi\in \cl{\mathcal D_t}}  \min_{\substack{\lambda \in \mathbb R_+^{s} \\ \sum_{i=1}^{s} \lambda_i = 1}} \ \sum_{i=1}^{s} \lambda_i g(\bar x,\bar y^t_i,\xi) \\
& \quad - \max_{t=1,\ldots ,R} \max_{\xi\in \cl{\mathcal D_t}}  \min_{\substack{\lambda \in \mathbb R_+^{n_\xi+1} \\ \sum_{i=1}^{n_\xi+1} \lambda_i = 1}} \ \sum_{i=1}^{n_\xi+1} \lambda_i g(\bar x,\bar y^t_i,\xi) \\
& = \max_{t=1,\ldots ,R} \min_{\substack{\lambda \in \mathbb R_+^{s} \\ \sum_{i=1}^{s} \lambda_i = 1}} \max_{\xi\in \cl{\mathcal D_t}} \ \sum_{i=1}^{s} \lambda_i g(\bar x,\bar y^t_i,\xi) \\
& \quad - \max_{t=1,\ldots ,R} \min_{\substack{\lambda \in \mathbb R_+^{n_\xi+1} \\ \sum_{i=1}^{n_\xi+1} \lambda_i = 1}} \max_{\xi\in \cl{\mathcal D_t}}   \ \sum_{i=1}^{n_\xi+1} \lambda_i g(\bar x,\bar y^t_i,\xi) 
\end{align*}
where the first inequality follows since the recourse stable regions form a cover of $\mathcal U$ and since we use the optimal solution for the $R(n_\xi+1)$-adaptable problem and a feasible solution for the $Rs$-adaptable problem. The equality follows from the minimax theorem as applied before. Let $t^*$ be the index, which maximizes the first maximum of the latter term. Then the latter term can be bounded from above by
\begin{align*}
 & \min_{\substack{\lambda \in \mathbb R_+^{s} \\ \sum_{i=1}^{s} \lambda_i = 1}}\max_{\xi\in \cl{\mathcal D_{t^*}}}\ \sum_{i=1}^{s} \lambda_i g(\bar x,\bar y^{t^*}_i,\xi) - \min_{\substack{\lambda \in \mathbb R_+^{n_\xi+1} \\ \sum_{i=1}^{n_\xi+1} \lambda_i = 1}}\max_{\xi\in \cl{\mathcal D_{t^*}}} \ \sum_{i=1}^{n_\xi+1} \lambda_i g(\bar x,\bar y^{t^*}_i,\xi) \\
& \le \max_{\xi\in \cl{\mathcal D_{t^*}}}\ \sum_{i=1}^{s} \lambda^{t^*}_i g(\bar x,\bar y^{t^*}_i,\xi)
- \max_{\xi\in \cl{\mathcal D_{t^*}}} \ \sum_{i=1}^{n_\xi+1} \bar \lambda_i^{t^*} g(\bar x,\bar y^{t^*}_i,\xi),
\end{align*}
where the inequality follows since by definition the $\lambda_i^{t^*}$ are feasible for the minimum operator of the first min-max problem and $\bar \lambda_i^{t^*}$ are optimal for the second min-max problem. Let $\xi^*$ be a scenario which maximizes the first maximum of the latter expression. Then we can further bound the latter term from above by
\begin{align*}
    & \le \sum_{i=1}^{s} \lambda^{t^*}_i g(\bar x,\bar y^{t^*}_i,\xi^*)
-  \sum_{i=1}^{n_\xi+1} \bar \lambda_i^{t^*} g(\bar x,\bar y^{t^*}_i,\xi^*) \\
    & = \sum_{i=s+1}^{n_\xi + 1} \bar \lambda_i^{t^*} \left( g(\bar x,\bar y^{t^*}_s,\xi^*) - g(x^*,\bar y^{t^*}_i,\xi^*) \right) \\
    & \le L \sum_{i=s+1}^{n_\xi  + 1}\bar \lambda_i^{t^*}\| \bar y^{t^*}_s - \bar y^{t^*}_i \| \\
    & \le L \text{diam}(\mathcal Y) \sum_{i=s+1}^{n_\xi + 1}\bar \lambda_i^{t^*},
\end{align*}
where the first inequality follows since $\xi^*$ is optimal for the first maximum and feasible for the second maximum, the first equality follows from the definition of  $\lambda_s^{t^*}$, the second inequality follows from the Lipschitz continuity of $g$, and the last inequality follows from the definition of the diameter. From the sorting $\bar \lambda_1^{t^*}\ge \ldots \ge \bar \lambda_{n_\xi +1}^{t^*}$ and since $\sum_{i=1}^{n_\xi + 1}\bar \lambda_i^{t^*} = 1$ it follows $\bar \lambda_i^{t^*}\le \frac{1}{i}$. Hence we can further bound 
\begin{align*}
    L \text{diam}(\mathcal Y) \sum_{i=s+1}^{n_\xi +1}\bar \lambda_i^{t^*} \le L \text{diam}(\mathcal Y)\sum_{i=s+1}^{n_\xi +1}\frac{1}{i}
    \rev{\le L \text{diam}(\mathcal Y) \int_{s}^{n_\xi +1} \frac{1}{t}dt = L \text{diam}(\mathcal Y)\ln\left(\frac{n_\xi +1}{s}\right),}
\end{align*}
where the last inequality and equality follow from \rev{classical theory of integrals}. This proves the result.
\end{proof}
The latter theorem shows approximation bounds for a limited number of $k$-values, namely for the cases $k=Rs$ with $1\le s\le n_\xi +1$. However, the bound for $k=Rs$ holds for all $k\in\{Rs+1, Rs+2, \ldots, Rs + R-1\}$ as well, due to the decreasing optimal value for larger $k$, although it may be less precise.
}
\section{Finite Uncertainty Sets}\label{sec:finite_uncertainty}
In this section we study the case where the uncertainty set is finite, i.e, $\mathcal U = \{ \xi^1, \ldots, \xi^t\}$. We compare the more general problems \eqref{eq:2StageRO_constr} and \eqref{eq:k-adaptability_constr} where the uncertainty can appear in the constraints. Note that finite uncertainty sets naturally appear in data-driven applications where usually a finite set of historical scenarios or a finite set of samples is available for the uncertain parameters. Furthermore, by sampling scenarios from a convex uncertainty set, the finite uncertainty case can be interpreted as an approximation of the convex case.

\subsection{Complexity}
We will show in this section, that \eqref{eq:k-adaptability_constr} with finite uncertainty sets is closely connected to the set covering problem. We can exploit this insight to (i) show that calculating the minimal $k$ for which \eqref{eq:2StageRO_constr} and \eqref{eq:k-adaptability_constr} are equivalent is NP-hard, and (ii) derive a greedy algorithm which provides an approximation of the minimal $k$ value if there are no first-stage decisions. We first define the \textit{$k$-equivalence problem} and the \textit{set cover problem}.

\begin{definition}
For a given $k\in\mathbb N$ and an instance of \eqref{eq:2StageRO_constr}, the \textit{$k$-equivalence problem} answers the question if  the optimal value of \eqref{eq:k-adaptability_constr} is equal to the optimal value of \eqref{eq:2StageRO_constr}. 
\end{definition}
Note that such a $k\in \mathbb N$ always exists \rev{if \eqref{eq:2StageRO_constr} has at least one feasible solution $x\in\mathcal X$}, since $k=|\mathcal Y|$ is a valid choice. A closely connected problem is the following:
\begin{definition}
For a given number $\kappa\in \mathbb N$, a finite universe $\mathcal V$, and a finite set $\mathscr{S}$ of subsets of $\mathcal V$ \rev{with $\bigcup_{\mathcal S\in \mathscr S} S = \mathcal V$}, the set cover problem answers the question if there exist $\kappa$ sets $\mathcal S_1, \ldots ,\mathcal S_\kappa\in \mathscr S$ such that $\mathcal V = \bigcup_{i=1}^{\kappa} \mathcal S_i$. 
\end{definition}
\rev{Note that the assumption $\bigcup_{\mathcal S\in \mathscr S} S = \mathcal V$ is not restrictive since for any given sets  $\mathcal V$ and $\mathscr{S}$ we can always check in time polynomial in the input size if the sets in $\mathscr{S}$ indeed cover $\mathcal V$ and return the answer no otherwise.}

The main intuition why the latter two problems are closely related is the following: Assume we know the optimal value of \eqref{eq:2StageRO_constr}, denoted as $v^*$ and an optimal solution $x^*\in \mathcal X$. Consider the universe $\mathcal V = \mathcal U$ and define the set 
\[
\mathscr{S}:=\left\{ \mathcal S_y: y\in \mathcal Y(x^*) \right\} 
\]
where
\[
\mathcal S_y=\{ \xi\in \mathcal U: A(\xi) x^* + B(\xi)y \ge h(\xi), \ g(x,y,\xi) \le v^* \},
\]
i.e., the set $\mathcal S_y$ contains all scenarios from $\mathcal U$, for which the corresponding second-stage solution $y$ is feasible and the objective value is at most $v^*$. Since $\mathcal U$ is finite, all sets $\mathcal S_y$ are finite. If we can find a set cover $\mathcal S_{y^1},\ldots ,\mathcal S_{y^\kappa}\in \mathscr S$ for $\mathcal U$, then $(x^*,y^1,\ldots ,y^\kappa)$ is a feasible solution for \eqref{eq:k-adaptability_constr} with objective value $v^*$ and hence for $k=\kappa$ Problem \eqref{eq:2StageRO_constr} and \eqref{eq:k-adaptability_constr} have the same optimal value. 
% It can be easily verified that no smaller value for $k$ exists which has the same property, since otherwise the corresponding second-stage solutions would form a cover of $\mathcal U$ which is a contradiction to the minimality of the set cover solution. 

We can use the latter intuition to prove the following theorem. 
\begin{theorem}\label{thm:np-completeness}
    The $k$-equivalence problem is NP-complete, even if the uncertainty only appears in the objective function.
\end{theorem}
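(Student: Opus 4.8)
The plan is to establish the two halves of NP-completeness separately, reusing the correspondence with the set cover problem sketched before the theorem. For membership in NP I would exploit the finiteness of $\mathcal U$ and $\mathcal Y$: for any fixed first-stage decision $x$ and any choice of policies $y^1,\dots,y^k\in\mathcal Y(x)$, the worst-case value $\max_{\xi\in\mathcal U}\min_{i} f(x,y^i,\xi)$ is a maximum of finitely many evaluations and is computable in polynomial time, and likewise $\min_{y\in\mathcal Y(x)} f(x,y,\xi)$ for each scenario. A YES-certificate then consists of an optimal first-stage decision together with $k$ second-stage policies attaining the optimal value of \eqref{eq:2StageRO_constr}, and verification reduces to comparing two polynomially computable numbers. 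For NP-hardness I would reduce from the set cover problem, which is NP-complete, encoding the universe as the uncertainty set and the candidate subsets as the available second-stage policies.

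Concretely, given a set cover instance $(\mathcal V,\mathscr S,\kappa)$ with $\mathcal V=\{v_1,\dots,v_t\}$ and $\mathscr S=\{S_1,\dots,S_n\}$, I would construct an instance of \eqref{eq:2StageRO_constr} with no first-stage decision, so that $\mathcal X$ is a single point and all constraint data are trivial (no second-stage constraints), meaning the uncertainty enters only through the objective. I set $\mathcal U=\{e_1,\dots,e_t\}\subset\mathbb R^{t}$, one scenario $\xi^{j}=e_j$ per element $v_j$, and $\mathcal Y=\{y^{(1)},\dots,y^{(n)}\}\subset\{0,1\}^{t}$, where $y^{(i)}$ is the indicator vector of $S_i$. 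The objective is the linear function $g(y,\xi)=1-\xi^{\top}y$, which is continuous and concave in $\xi$, so that $g(y^{(i)},\xi^{j})=1-[\,v_j\in S_i\,]$ equals $0$ if $v_j\in S_i$ and $1$ otherwise. The encoding has size polynomial in the set cover input.

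Correctness then follows by matching objective values. Since $\mathscr S$ covers $\mathcal V$, for every scenario $\xi^{j}$ there is some policy with value $0$, whence the optimal value of \eqref{eq:2StageRO_constr} equals $\max_{j}\min_{i} g(y^{(i)},\xi^{j})=0$. For a choice of policies indexed by $S_{i_1},\dots,S_{i_k}$, the $k$-adaptable value equals $\max_j\bigl(1-\max_{l}[\,v_j\in S_{i_l}\,]\bigr)$, which is $0$ precisely when every $v_j$ lies in some chosen $S_{i_l}$, i.e. when the selected sets form a cover, and is $1$ otherwise. Hence $\text{opt}(k)=0$ coincides with the optimal value of \eqref{eq:2StageRO_constr} if and only if a set cover of size at most $k$ exists; taking $k=\kappa$ shows that the $k$-equivalence instance is a YES instance exactly when the set cover instance is, and since $f=g$ throughout, the hardness already holds under objective-only uncertainty.

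The main obstacle lies on the membership side rather than in the reduction: certifying equivalence in principle requires knowing the optimal value of \eqref{eq:2StageRO_constr}, and if $\mathcal X$ were a large or continuous set this would amount to proving optimality of the guessed first-stage decision, a universally quantified claim that need not lie in NP. I would resolve this by working in the finite setting where $\text{opt}(|\mathcal Y|)$ equals the optimal value of \eqref{eq:2StageRO_constr}: guessing an optimal first-stage decision and evaluating $\min_{y\in\mathcal Y}$ and $\min_{i}$ over the finite sets $\mathcal U$ and $\mathcal Y$ keeps the verification polynomial. In particular, in the reduction there is no first-stage decision at all and the target value $0$ is immediate, so the guessed policies can be checked directly; this is what I would foreground, treating the reduction as the substantive part and the finite, first-stage-free evaluation as the certificate that places the decision problem in NP.
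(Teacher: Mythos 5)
Your reduction is essentially the paper's own: unit-vector scenarios for the universe elements, indicator-vector policies for the candidate subsets, a trivial first stage, and a linear objective (you use $g(y,\xi)=1-\xi^{\top}y$ where the paper uses $-\xi^{\top}y$, an immaterial affine shift), with the same value-matching argument showing equivalence at $k=\kappa$ iff a cover of size $\kappa$ exists. Your additional discussion of NP membership goes beyond the paper, which only proves hardness; you are right to flag that certifying $\text{opt}(k)=\text{opt}(\text{2RO})$ for general compact $\mathcal X$ is the delicate part, and your restriction to the finite, first-stage-free setting is a reasonable (if not fully general) way to place the decision problem in NP.
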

\begin{proof}
We reduce the NP-complete set cover problem to the $k$-equivalence problem. Consider an arbitrary instance of the set cover problem with universe $\mathcal V$ and finite set of subsets $\mathscr S$ and $\kappa\in \mathbb N$. Define an instance of \eqref{eq:2StageRO_constr} as follows: Define the first-stage feasible set $\mathcal X = \{ 0\}$,  the dimensions $n_y = n_\xi = |\mathcal V|$, the uncertainty set $\mathcal U=\{ \xi^v: v\in \mathcal V\}$, where $\xi^v = e_v$ is the unit vector with entry $1$ for element $v$. The second-stage feasible set is given as $\mathcal Y=\{ y^\mathcal S: \mathcal S\in \mathscr S\}$, where $y^{\mathcal S}_v = 1$ if $v\in \mathcal S$ and $y^{\mathcal S}_v = 0$ otherwise. Furthermore, $g(x,y,\xi) = -\xi^\top y$ and there are no further constraints in the second-stage, hence the uncertainty appears only in the objective function. For any cover $\mathcal S_1, \ldots ,\mathcal S_{\kappa'}$ of $\mathcal V$ we have that for every $\xi\in U$ there exists an $i\in [\kappa']$ such that $g(x,y^{\mathcal S_i},\xi) = -1$, i.e. the optimal value of \eqref{eq:k-adaptability_constr} with $k=\kappa'$ is $-1$. On the other hand, if for a given $\kappa$ no cover exists, then \eqref{eq:k-adaptability_constr} with $k=\kappa$ must have optimal value $0$. Since \rev{by assumption} there exists a cover of unknown size which covers the set $\mathcal V$ the \eqref{eq:2StageRO_constr} must have optimal value $-1$. Hence \eqref{eq:2StageRO_constr} and \eqref{eq:k-adaptability_constr} have the same optimal value for $k=\kappa$ if and only if the answer to the set cover problem is yes, which proves the result.
\end{proof}

\subsection{Calculating $k$ for Min-max-min Robust Optimization}
We will now derive a greedy algorithm that provides an approximation for the minimal $k$ for which the two-stage robust problem and the $k$-adaptability problem have the same optimal value in case there are no first-stage decisions. In this case \eqref{eq:k-adaptability_constr} reduces to
\begin{equation}\label{eq:min-max-min}
\min_{y^1,\ldots ,y^k\in \mathcal Y} \max_{\xi\in\mathcal U} \min_{\substack{i\in [k]: \\ B(\xi)y^i\ge h(\xi)}} g(y^i,\xi)
\end{equation}
and the two-stage robust problem \eqref{eq:2StageRO_constr} reduces to \eqref{eq:min-max-min} with $k=|\mathcal Y|$. Note that the optimal value of \eqref{eq:2StageRO_constr} in this case is
\[
v^*:=\max_{\xi\in\mathcal U} \min_{y\in\mathcal Y} g(y,\xi)
\]
which can be calculated by solving the deterministic problem $\min_{y\in\mathcal Y} g(y,\xi)$ for each of the $t$ scenarios in $\mathcal U$. Then calculating the minimum value of $k$ for which \eqref{eq:min-max-min} has optimal value $v^*$ can be done by calculating the minimum value $\kappa$ for which a set cover of the following set-cover instance exists: define $\mathcal V = \mathcal U$, 
\[
\mathscr{S}:=\left\{ \mathcal S_y: y\in \mathcal Y \right\} 
\]
and
\[
\mathcal S_y:=\{ \xi \in \mathcal U: B(\xi)y\ge h(\xi), \ g(y,\xi)\le v^*\} .
\]
The latter problem is also called \textit{minimum set cover problem}. While this problem is computationally very challenging to solve there exist fast approximations algorithms, e.g. the classical greedy algorithm \cite{williamson2011design}. The idea of the algorithm is to iteratively select the set $\mathcal S_y$ which covers most of the elements in $\mathcal U$ which are still uncovered by the previously selected sets. For a given set of uncovered scenarios $\mathcal U'\subset \mathcal U$, the set $\mathcal S_y$ which covers the maximum number of elements in $\mathcal U'$ can be calculated by solving the following integer optimization problem:
\begin{equation}\label{eq:max_set_cover}
\begin{aligned}
    \rev{\text{opt}(\mathcal U'):}= \max_{y,z} \ & \sum_{\xi\in \mathcal U'} z_\xi \\
    s.t. \quad & g(y,\xi) \le v^* + M(1-z_\xi) \quad \forall \xi\in\mathcal U' \\
    & B(\xi)y\ge h(\xi) - M(1-z_\xi) \quad \forall \xi\in\mathcal U' \\
    & y\in\mathcal Y, z\in\{ 0,1\}^{\mathcal U'}
\end{aligned}
\end{equation}
where $M$ is a large enough value. The binary variable $z_\xi$ can only be set to one if the selected solution $y$ is feasible for $\xi$ and has objective value at most $v^*$ which is ensured by the big-M constraints. Hence, the optimal value of the latter problem is equal to the maximum number of scenarios in $\mathcal U'$ which can be covered by a set $\mathcal S_y$. The whole greedy algorithm is presented in Algorithm \ref{alg:greedy_setcover}. Note that while the algorithm returns the number of solutions needed, at the same time it also calculates a set of feasible solutions.

\begin{algorithm}
\caption{Greedy Algorithm to Approximate the Minimal $k$}\label{alg:greedy_setcover}
\begin{algorithmic}
\Require $\mathcal U$, $\mathcal Y$, $B(\xi)$, $h(\xi)$, $g$
\Ensure $k_{\text{ub}}$
\State Set of solutions $\mathcal Y_{\text{sol}} = \emptyset$
\State Set of uncovered scenarios $\mathcal U' = \mathcal U$
\While{$\mathcal U'\neq \emptyset$}
\State Calculate optimal solution $y_{\mathcal U'}^*,z_{\mathcal U'}^*$ of \eqref{eq:max_set_cover}.
\State $\mathcal Y_{\text{sol}}\leftarrow \mathcal Y_{\text{sol}} \cup \{ y_{\mathcal U'}^*\}$
\State Remove all scenarios $\xi$ from $\mathcal U'$ for which $z_{\mathcal U'}^*=1$.
\EndWhile \\
\Return $k_{\text{ub}}:=|\mathcal Y_{\text{sol}}|$
\end{algorithmic}
\end{algorithm}

\begin{theorem}
    Algorithm \ref{alg:greedy_setcover} calculates a value $k_{\text{ub}}$ with 
    \[
    k_{\text{ub}} \le \left( 1+\ln \left( t \right)\right) k_{\text{opt}}
    \]
    where $k_{\text{opt}}$ is the minimum value for $k$ for which \eqref{eq:2StageRO_constr} and \eqref{eq:k-adaptability_constr} have the same optimal value.
\end{theorem}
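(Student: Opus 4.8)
The plan is to recognize that computing $k_{\text{opt}}$ is exactly the minimum set cover problem on the instance $(\mathcal U,\mathscr S)$ constructed just before the theorem, that Algorithm \ref{alg:greedy_setcover} is precisely the classical greedy algorithm applied to this instance, and that the stated ratio then follows from the standard analysis of greedy set cover. The analytic heart is classical and citable; the real work is the reduction and its two-directional verification.

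First I would prove $k_{\text{opt}}=\kappa^*$, where $\kappa^*$ is the size of a minimum cover of $\mathcal U$ by sets in $\mathscr S=\{\mathcal S_y:y\in\mathcal Y\}$. For $k_{\text{opt}}\le\kappa^*$: given a cover $\mathcal S_{y^1},\ldots,\mathcal S_{y^\kappa}$, every $\xi\in\mathcal U$ lies in some $\mathcal S_{y^i}$, so $y^i$ is feasible for $\xi$ with $g(y^i,\xi)\le v^*$; thus for each $\xi$ the inner minimum in \eqref{eq:min-max-min} is at most $v^*$, and taking the max over $\xi$ shows that the $\kappa$ policies achieve value at most $v^*$. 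Since $v^*$ is the two-stage optimum it is a lower bound on $\text{opt}(k)$ for every $k$, so equality holds and $k=\kappa$ is $k$-equivalent. The reverse inequality $k_{\text{opt}}\ge\kappa^*$ is symmetric: any $k$ policies achieving value $v^*$ must, for every $\xi$, contain a feasible policy of objective at most $v^*$, i.e.\ $\xi\in\mathcal S_{y^i}$ for some $i$, yielding a cover of size $k$. I would also record that a cover exists at all: by definition of $v^*=\max_{\xi}\min_{y}g(y,\xi)$, for each $\xi$ the inner minimum is attained by a feasible $y$ with $g(y,\xi)\le v^*$, so $\bigcup_{y}\mathcal S_y=\mathcal U$.

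Next I would verify that Algorithm \ref{alg:greedy_setcover} coincides with greedy set cover. The only nontrivial point is that the subproblem \eqref{eq:max_set_cover} returns, for the current uncovered set $\mathcal U'$, a solution maximizing $|\mathcal S_y\cap\mathcal U'|$: for each $\xi\in\mathcal U'$ the big-$M$ constraints permit $z_\xi=1$ only when $y$ is feasible for $\xi$ and $g(y,\xi)\le v^*$, that is precisely when $\xi\in\mathcal S_y$, while any such $\xi$ may be counted by setting $z_\xi=1$. Hence the optimal objective equals $\max_{y}|\mathcal S_y\cap\mathcal U'|$ and the returned $y^*_{\mathcal U'}$ realizes it, so each iteration selects a set covering the largest number of still-uncovered scenarios, which is exactly the greedy rule; the loop terminates when $\mathcal U'=\emptyset$, i.e.\ when a cover has been assembled.

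Finally I would invoke the classical guarantee for greedy set cover (e.g.\ \cite{williamson2011design}): on a universe of $t=|\mathcal U|$ elements it returns a cover of size at most $H_t\cdot\kappa^*$, where $H_t=\sum_{i=1}^{t}\frac{1}{i}\le 1+\ln t$. Since $k_{\text{ub}}=|\mathcal Y_{\text{sol}}|$ is exactly the size of the cover produced and $\kappa^*=k_{\text{opt}}$ by the first step, this gives $k_{\text{ub}}\le (1+\ln t)\,k_{\text{opt}}$. I expect the main obstacle to be not analytic but the careful bookkeeping of the reduction, namely checking in both directions that $k$-equivalence is genuinely identical to coverage, handling the feasibility of the second-stage policies and the role of $v^*$ correctly, together with the argument that the greedy subproblem \eqref{eq:max_set_cover} computes the true maximum-coverage set rather than a relaxation of it.
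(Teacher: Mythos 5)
Your proof is correct and follows essentially the same route as the paper: the paper's own proof is a one-line appeal to the classical greedy set-cover guarantee, relying on the reduction to minimum set cover set up in the surrounding text. Your write-up merely makes explicit the two-directional equivalence $k_{\text{opt}}=\kappa^*$ and the fact that \eqref{eq:max_set_cover} computes the exact maximum-coverage set, both of which the paper asserts informally before stating the theorem.
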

\begin{proof}
    The results directly follows from the approximation guarantee of the greedy algorithm for the minimum set cover problem; see for example \cite{williamson2011design}.
\end{proof}

\begin{remark}
Algorithm \ref{alg:greedy_setcover} also provides a lower bound on the value $k_{\text{opt}}$. To see this, note that the number of scenarios which is covered by $y^*$ in the first iteration of the while-loop \rev{(i.e. $\text{opt}(\mathcal U)$)} is the maximum number of scenarios which can be covered by any solution $y\in \mathcal Y$. Hence, a lower bound on $k_{\text{opt}}$ is given as
\[
k_{\text{lb}} = \Bigl\lceil \frac{t}{\rev{\text{opt}(\mathcal U)}} \Bigr\rceil \le k_{\text{opt}}.
\]
\end{remark}

\paragraph{Experiments} The following experiment shows that indeed Algorithm \ref{alg:greedy_setcover} is able to calculate reasonable approximations for the minimum $k$ value in reasonable time. Unfortunately, there exists no method which is able to calculate the optimal value for $k$ \rev{in reasonable time} even for small dimensions \rev{$n_x,n_y$ and $n_\xi$}. Hence, no comparison with the optimal value can be made. However, the values calculated by the greedy algorithm are significantly smaller than the number of scenarios $t$ and the lower-bound provides an estimation of the quality of the calculated value.

We consider the $k$-adaptable version of the minimum knapsack problem which is defined as
\[
\min_{y^1,\ldots ,y^k\in \mathcal Y} \max_{\xi\in\mathcal U} \min_{\substack{i\in [k]: \\ a(\xi)^\top y^i\ge b}} c(\xi)^\top y^i
\]
where objective parameters $c(\xi)$ and the constraint parameters $a(\xi)$ are uncertain. We generate instances of the problem as follows. For a given dimension $n_y$ we draw a random vector $\bar a \in \mathbb N^{n_y}$ where each component $\bar a_i$ is drawn uniformly at random from $\{ 40,41,\ldots ,60\}$.  The knapsack capacity is set to $b=0.2\sum_{i=1}^{n_y} \bar a_i$. We draw a random vector $\bar c$ where each component $\bar c_i$ is drawn uniformly at random from $\{ \bar a_i -5, \bar a_i-4, \ldots ,\bar a_i + 5\}$. Then we define a deviation parameter $\Gamma=\lfloor \frac{n_y}{4}\rfloor$ and generate $m$ scenarios $\{(c(\xi^j),a(\xi^j))\}_{j\in [m]}$ as follows: for each scenario we pick $\Gamma$ random indices from $[n_y]$ and set $c(\xi^j)_i = 1.5\bar c_i$ for each of the latter indices. For all other indices we set $c(\xi^j)_i = \bar c_i$. Such a scenario can be interpreted as a random scenario from a budgeted uncertainty set where at most $\Gamma$ many value can deviate from its mean value. We follow the same procedure to generate the vector $a(\xi^j))$.

We present the runtime and the values for $k_{\text{ub}}$ and $k_{\text{lb}}$ calculated by Algorithm \ref{alg:greedy_setcover} in Figure \ref{fig:greedy_alg_over_t} and \ref{fig:greedy_alg_over_n}. In Figure \ref{fig:greedy_alg_over_t} we show the latter values for varying number of scenarios $t$. In Figure \ref{fig:greedy_alg_over_n} we show the same values for varying dimension $n_y$. The results indicate that the calculated upper bound $k_{\text{ub}}$ is close to the lower bound $k_{\text{lb}}$ and hence close to the optimal $k$ \rev{compared to the trivial bound $\bar k_{ub} = t$ (optimality gap of $200\%$ to $400\%$ for varying $t$ and $100\%$ to $250\%$ for varying $n_y$)}. Furthermore it can be seen that the optimal $k$ is quite small compared to the number of scenarios $t$. The values $k_{\text{ub}}$ and $k_{\text{lb}}$ increase with increasing dimension $n_y$ and number of scenarios. Similarly, the runtime increases with increasing dimension $n_y$ and number of scenarios and is never larger than $800$ seconds.

\begin{figure}
    \centering
    \includegraphics[width=0.45\linewidth]{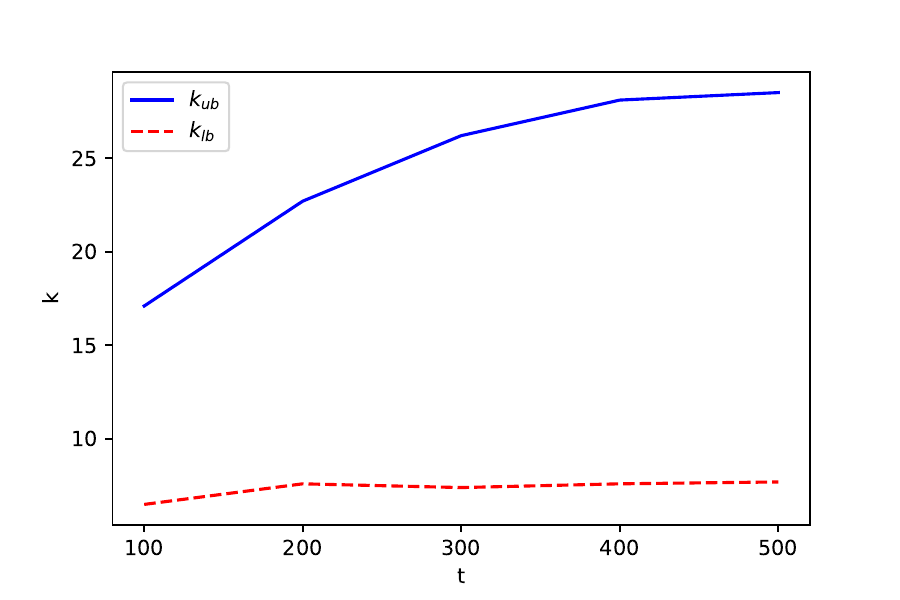} \quad 
    \includegraphics[width=0.45\linewidth]{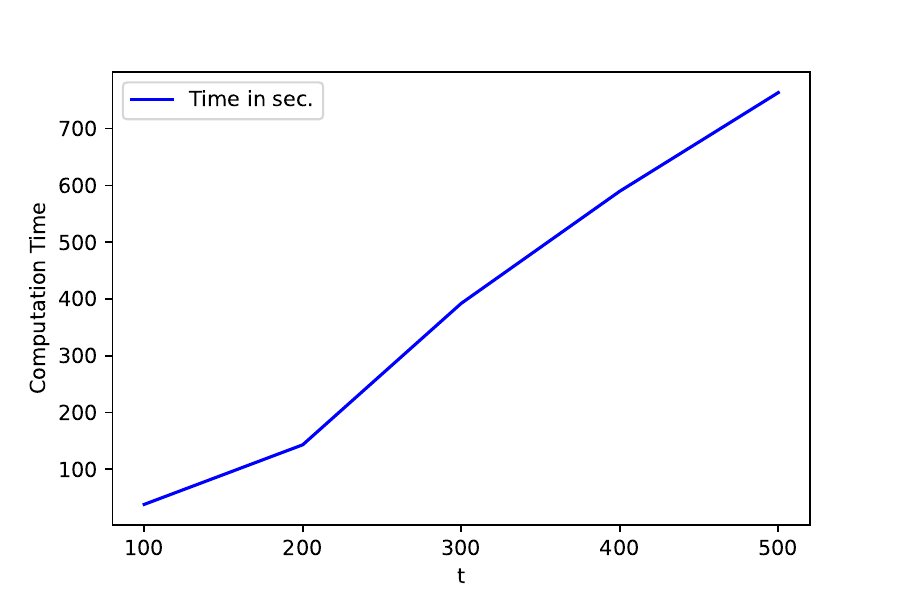}
    \caption{Lower bound $k_{\text{lb}}$ and upper bound $k_{\text{ub}}$ on optimal $k$ value calculated by Algorithm \ref{alg:greedy_setcover} (left) and runtime of Algorithm \ref{alg:greedy_setcover} (right) over the number of scenarios $t$ with dimension $n_y=20$.}
    \label{fig:greedy_alg_over_t}
\end{figure}

\begin{figure}
    \centering
    \includegraphics[width=0.45\linewidth]{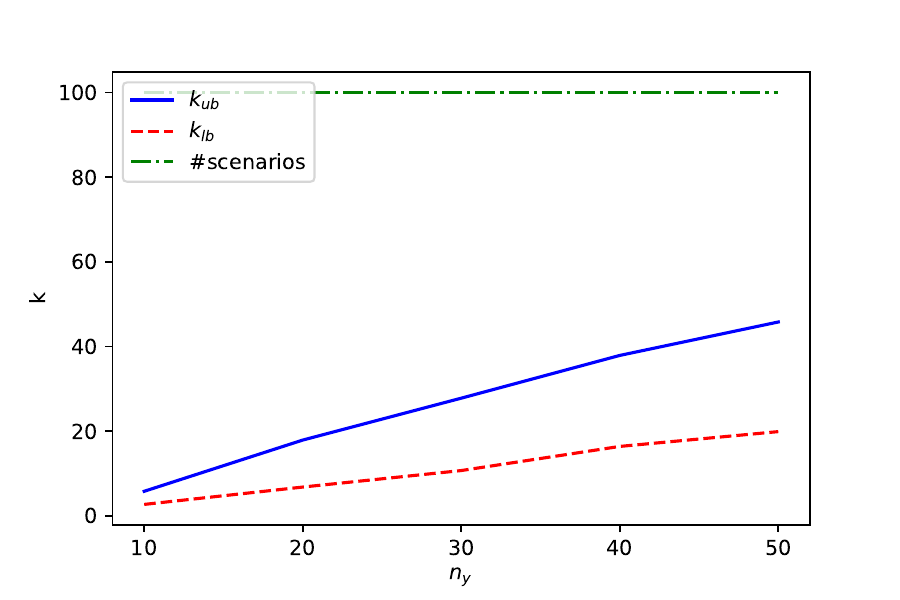} \quad 
    \includegraphics[width=0.45\linewidth]{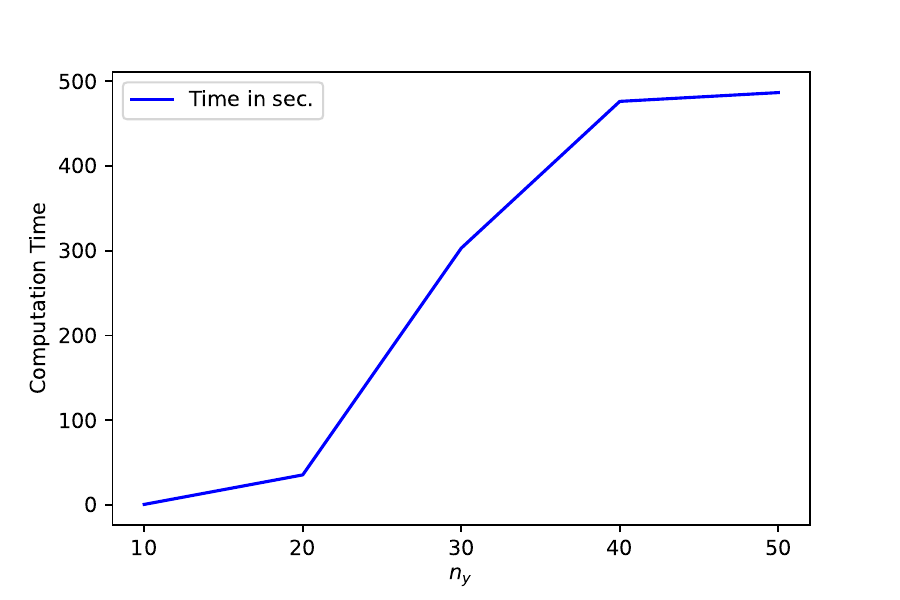}
    \caption{Lower bound $k_{\text{lb}}$ and upper bound $k_{\text{ub}}$ on optimal $k$ value calculated by Algorithm \ref{alg:greedy_setcover} (left) and runtime of Algorithm \ref{alg:greedy_setcover} (right) over the dimension $n_y$ with $t=100$ scenarios.}
    \label{fig:greedy_alg_over_n}
\end{figure}

\section{Conclusion}
In this work we derived bounds on the number $k$ of second-stage solutions which are needed such that the $k$-adaptability approach returns an optimal solution for the original two-stage robust problem \rev{or an approximately optimal solution}. We distinguished the two cases of objective uncertainty and constraint uncertainty for convex uncertainty sets. Interestingly, for objective uncertainty the number of solutions needed is $k=n_\xi +1$, i.e., it depends only on the dimension of the uncertainty. This results hold for a very general class of continuous objective functions. We used the latter result to derive approximation guarantees the $k$-adaptability problem provides for all values of $k$ smaller than the bound above.

For convex constraint uncertainty we developed a new concept called recourse-stability. A recourse-stable region is a subset of the uncertainty set such that each second-stage solution is either feasible or infeasible on the whole set. We could show that to guarantee optimality we need at most $k=R(n_\xi + 1)$ second-stage solutions, where $R$ is the number of recourse-stable regions needed to cover the uncertainty set. We show that we can determine a value for $R$ by considering all possible hyperplanes which can appear in the uncertain constraints by plugging in all possible second-stage solutions. Examples show that the derived value for $R$ provides good bounds on $k$ for many problem structures. 

Finally, for finite uncertainty sets we present a connection between $k$-adaptability and the set cover problem which can be used to show that the problem of finding the minimum $k$ value is NP-hard. Furthermore, this connection leads to a greedy algorithm which approximates the optimal $k$ if there are no first-stage decisions.

There remain several open questions to be tackled in the future. First, it would be interesting if we can achieve better values for $R$ by focusing on certain applications and the corresponding problem structures. Furthermore, it would be interesting if \rev{more general} approximation bounds as for objective uncertainty could also be derived for the constraint uncertainty case. Finally, an important question is if the methodology derived in this work can contribute to the development of efficient solution methods.

\paragraph*{Acknowledgement}
The author wants to thank Henri Lefebvre and Dick den Hertog for the fruitful discussions about the problem and consequent insights.

\paragraph{Funding and Competing Interests}
The author did not receive support from any organization for the submitted work. The author has no relevant financial or non-financial interests to disclose.

\bibliographystyle{alpha}
\bibliography{references}

\end{document}